\newcommand{\mz}{\ensuremath{\mathbb Z}}
\newcommand{\mq}{\ensuremath{\mathbb Q}}
\newcommand{\mc}{\ensuremath{\mathbb C}}
\newcommand{\mcs}{\ensuremath{\mathcal S}}
\newcommand{\mymod}{\ensuremath{\negthickspace \negmedspace \pmod}}
\newcommand{\shortmod}{\ensuremath{\negthickspace \negthickspace \negthickspace \pmod}}
\newcommand{\half}{\ensuremath{ \frac{1}{2}}}
\newcommand{\chibar}{\overline{\chi}}
\newcommand{\thalf}{\tfrac12}
\newcommand{\sumstar}{\sideset{}{^*}\sum}
\DeclarePairedDelimiter\autobracket{(}{)}
\newcommand{\pb}[1]{\autobracket*{#1}}
\newcommand{\ov}{\overline}
\newcommand{\sumn}{\sum_{n \geq 1}}
\newcommand{\sumnq}{\sum_{\substack{n\ge1 \\ (n,q)=1}}}
\newcommand{\sumstarchiq}{\ \sideset{}{^*}\sum_{\chi \shortmod{q}}}
\newcommand{\sumstarchipsplusminus}{\ \sideset{}{^*}\sum_{\substack{\chi \mymod{q}\\ \chi(-1)=\pm 1}}}
\newcommand{\sgn}{\mathrm{sgn}}
\newtheorem{prop}{Proposition}
\theoremstyle{plain}		
	\newtheorem{mytheo}{Theorem}[section]
	\newtheorem{mycoro}[mytheo]{Corollary}
     \newtheorem{mylemma}[mytheo]{Lemma}
	\newtheorem{myremark}[mytheo]{Remark}
\theoremstyle{remark}
\numberwithin{equation}{section}
\begin{document}
\author{Agniva Dasgupta}
\author{Rizwanur Khan}
\author{Ze Sen Tang}

 \address{Department of Mathematical Sciences\\ University of Texas at Dallas\\ Richardson, TX 75080-3021}
 \email{agniva.dasgupta@utdallas.edu, rizwanur.khan@utdallas.edu, zesen.tang@utdallas.edu}

	  \keywords{$L$-functions, moments, reciprocity, Dirichlet characters, modular forms, Maass forms}
  \thanks{This work was supported by the National Science Foundation grants DMS-2341239 and DMS-2344044.}
\subjclass[2020]{11M99, 11F12}

% \begin{abstract}
% \end{abstract}
\title{Reciprocity for $GL(2)$ $L$-functions twisted by Dirichlet characters}

\begin{abstract}
A formula connecting a moment of $L$-functions and a dual moment in a way that interchanges the roles of certain key parameters on both sides is known as a reciprocity relation. We establish a reciprocity relation for a first moment of $GL(2)$ $L$-functions twisted by Dirichlet characters. This extends, via a new and simple argument, some results of Bettin, Drappeau, and Nordentoft.

\end{abstract}

\maketitle

\section{Introduction}

\subsection{Statement of results}
\label{subsec:mainresult}

Reciprocity relations lie at the heart of many fundamental results in number theory. One of the earliest instances of this phenomenon is the celebrated law of quadratic reciprocity, which relates the Legendre symbols $(\frac{p}{q})$ and $(\frac{q}{p})$. In the context of $L$-functions, reciprocity refers to a relation between a moment of $L$-functions and a dual moment in a way that swaps
the roles of certain key parameters on both sides. A beautiful example of this phenomenon is the following formula discovered by Conrey \cite{conrey}, and later improved by Young \cite{young}, concerning the twisted second moment of Dirichlet $L$-functions. For $p$ and $q$ distinct odd primes, let $c(q)=2q^{\frac12}-2$ and define the twisted moment 
\begin{equation}
\label{eq:twistmomdeg1}
M(p,q) = \frac{1}{\phi(q)} \pb{\sumstarchiq q^{\frac12}\left\lvert L\pb{\frac12,\chi} \right\rvert^2 \chi(p) +  c(q) \zeta\pb{\half}^2},
\end{equation}
where $\sum^*$ indicates that the sum is over primitive characters. We have
\begin{prop}
\label{prop:con}\cite[Corollary 1.2]{young}
For any fixed $\varepsilon, C>0$ and odd primes $p<q^{1-\varepsilon}$, we have
\begin{equation}
\label{eq:con}
    M(p,q) - M(-q,p) = \zeta\pb{\half}^2 + \frac{q^{\frac12}}{p^{\half}} \pb{\log\frac{q}{p} + \gamma -\log 8\pi} +  O(pq^{-1+\varepsilon} + p^{-C}).
\end{equation}
\end{prop}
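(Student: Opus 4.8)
I would follow the classical route of Conrey and Young.

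\textbf{Step 1 (opening the moment).} Insert the approximate functional equation for $|L(\half,\chi)|^2$: for a primitive $\chi\pmod q$ with $\chi(-1)=(-1)^{\mathfrak{a}}$, $\mathfrak{a}\in\{0,1\}$,
\[
\left|L\pb{\half,\chi}\right|^2=2\sum_{m,n\ge1}\frac{\chi(m)\ov{\chi}(n)}{\sqrt{mn}}\,V_{\mathfrak{a}}\!\pb{\frac{\pi mn}{q}},\qquad V_{\mathfrak{a}}(y)=\frac{1}{2\pi i}\int_{(3)}\frac{\Gamma\!\left(\tfrac14+\tfrac{\mathfrak{a}}{2}+\tfrac s2\right)^2}{\Gamma\!\left(\tfrac14+\tfrac{\mathfrak{a}}{2}\right)^2}\,G(s)\,y^{-s}\,\frac{ds}{s},
\]
$G$ even, entire, rapidly decaying, $G(0)=1$. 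Separating the primitive characters by parity and using orthogonality — which, $q$ being prime, is the sum over all $\chi\pmod q$ minus the principal character — reduces $\frac{q^{\half}}{\phi(q)}\sumstarchiq\chi(p)|L(\half,\chi)|^2$ to a main sum $2q^{\half}\sum_{m,n}(mn)^{-\half}V_{\mathfrak{a}}(\pi mn/q)$, where $m,n\ge1$ run over solutions of $pm\equiv\pm n\pmod q$ with $(mn,q)=1$, plus a principal-character remainder of size $O(q^{-\half+\varepsilon})$, the latter being of the same shape as, and accounted for by, the term $c(q)\zeta(\half)^2$ in the definition of $M(p,q)$.

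\textbf{Step 2 (the leading term).} The terms with $n=pm$, which are non-negligible only for $m\ll(q/p)^{\half}$, contribute $\frac{2q^{\half}}{p^{\half}}\sum_{\ell\ge1}\ell^{-1}V_{\mathfrak{a}}(\pi p\ell^2/q)$; by Mellin inversion this is a contour integral of $\widetilde V_{\mathfrak{a}}(s)\left(\tfrac{\pi p}{q}\right)^{-s}\zeta(1+2s)$, in which $\widetilde V_{\mathfrak{a}}$ has a simple pole at $s=0$ from its $1/s$ factor. That pole and the simple pole of $\zeta(1+2s)$ at $s=0$ combine into a double pole whose residue — once the even and odd contributions are summed, so that the $\pm\tfrac{\pi}{2}$ discrepancy between the two archimedean factors cancels and their constant parts leave $-3\log2$, and once the $-\log\pi$ from the archimedean scaling and the Laurent constant $\gamma$ of $\zeta(1+2s)$ are included — works out to exactly $\frac{q^{\half}}{p^{\half}}\big(\log\tfrac qp+\gamma-\log8\pi\big)$. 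Pushing the contour past the next pole of the archimedean factor (at $s=-\half$ for even characters, $s=-\tfrac32$ for odd) leaves further terms of size up to $\log q$, after which the tail is $O(pq^{-1})$.

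\textbf{Step 3 (off-diagonal and reciprocity).} For the remaining terms — all those with $pm\equiv-n\pmod q$, and those with $pm\equiv n\pmod q$ but $n\ne pm$ — rewrite the congruence as $m\equiv\pm n\,\ov p\pmod q$ ($\ov p$ the inverse mod $q$) and apply Poisson summation in $m$. The zero frequency yields additional main terms of sizes $\log q$, $\zeta(\half)^2$ and $\zeta(\half)^2p^{-\half}$. For a nonzero frequency $k$, rewrite the additive character via the reciprocity $\frac{\ov p k}{q}\equiv\frac{k}{pq}-\frac{\ov q k}{p}\pmod1$ ($\ov q$ the inverse mod $p$) as $e\!\left(\pm\tfrac{nk}{pq}\right)e\!\left(\mp\tfrac{n\ov q k}{p}\right)$, expand $e(\mp n\ov q k/p)$ into Gauss sums of characters $\psi\pmod p$ — so the $n$- and $k$-sums each acquire a factor $\ov\psi$ — and apply the functional equation of $L(\half,\ov\psi)$ together with $\tau(\psi)\tau(\ov\psi)=\psi(-1)p$ and $\psi(-1)\psi(q)=\psi(-q)$. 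The $n$- and $k$-sums then reassemble into the approximate functional equation for $\frac{1}{\phi(p)}\sumstar_{\psi\bmod p}p^{\half}|L(\half,\psi)|^2\psi(-q)$, which is $M(-q,p)$ up to the $c(p)\zeta(\half)^2$ normalization and the separate contribution of the principal character mod $p$. The slowly varying factor $e(\pm nk/pq)$ is carried into the weight, and completing the dual sums to the exact approximate functional equation of the modulus-$p$ moment costs $O(p^{-C})$; the remaining errors are $O(pq^{-1+\varepsilon})$.

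\textbf{The main obstacle.} The hard part is not any single transformation but the bookkeeping of main terms: pieces of sizes $q^{\half}$, $\log q$, $\zeta(\half)^2$ and $\zeta(\half)^2p^{-\half}$ surface from the various residues and from the principal characters on both sides, and one must verify that they cancel or combine exactly into the clean right-hand side of \eqref{eq:con}. This is the reason the definition of $M(p,q)$ carries the term $c(q)\zeta(\half)^2$ with $c(q)=2q^{\half}-2$, and the reason for its counterpart inside $M(-q,p)$: these normalizations are chosen so that, after all cancellations, only the asserted main terms survive. A close second difficulty is arranging the Poisson-plus-reciprocity step of Step 3 so that it reconstructs $M(-q,p)$ precisely — with the correct Gauss-sum normalization and a uniform treatment of the even and odd characters.
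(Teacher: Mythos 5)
The paper does not prove this proposition; it is imported verbatim from \cite[Corollary 1.2]{young}, so there is no in-house proof to compare against. Your sketch is a reasonable reconstruction of the classical Conrey--Young route: approximate functional equation, character orthogonality modulo the prime $q$, Mellin inversion and a double-pole residue for the diagonal $n=pm$ (giving $\frac{q^{1/2}}{p^{1/2}}\big(\log\frac{q}{p}+\gamma-\log 8\pi\big)$ after combining parities), and Poisson summation together with the additive reciprocity $\frac{\overline{p}k}{q}\equiv\frac{k}{pq}-\frac{\overline{q}k}{p}\pmod 1$ to reassemble the nonzero frequencies into $M(-q,p)$. You also correctly flag the main-term bookkeeping and the precise reassembly of the dual approximate functional equation as the genuinely delicate part, which is exactly where the $c(q)\zeta(\frac12)^2$ normalization and the $p^{-C}$ tail enter. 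One small inaccuracy: your Step~2 assertion that shifting past the next archimedean pole ($s=-\frac12$ or $s=-\frac32$) produces further terms of size up to $\log q$ is off; after extracting the double-pole residue at $s=0$, the remaining shifted diagonal integral is already a power of $p/q$ smaller and goes into the error.

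Be aware, though, that the method this paper actually develops for the cuspidal analogues (Theorems~\ref{thm:mainthm} and~\ref{thm:mainthm2}) is a genuinely different one. It packages the entire twisted moment as a single modular symbol $L\big(\frac12, f\otimes e(\overline{p}r/q)\big)$ via Lemma~\ref{lem:addtwistmoment}, inserts the additive reciprocity $e(n\overline{p}/q)=e(-n\overline{q}/p)\,e(n/pq)$ (Lemma~\ref{lem:recipexp}) together with the Mellin representation of $e(x)$ (Lemma~\ref{lem:mellinrep}), and then uses the functional equation of the additive twist and a contour shift to produce the dual moments directly. This bypasses the approximate functional equation and Poisson summation entirely, and the authors present it as simpler and more flexible than both the classical route you sketch and Bettin's continued-fraction argument for the Dirichlet second moment. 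Your proposal is a fair account of Young's proof, but it is not the mechanism used in this paper.
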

\noindent The point of this formula is that it relates the moment $ M(p,q) $ to the dual moment $ M(-q,p) $, so that the key parameters $p$ and $q$ are essentially swapped. One corollary of \eqref{eq:con} is an asymptotic for $M(p,q)-M(-q,p)$ for $p< q^{1-\varepsilon}$, as $q\to \infty$, while an asymptotic for $M(p,q)$ alone is only known for twists of size $p< q^{\half-\varepsilon}$. The result above was extended by Bettin \cite[Theorem 3]{bettin}, who obtained a full asymptotic series for $M(p,q)-M(-q,p)$, up to an arbitrarily small error term, under the same condition $p<q^{1-\varepsilon}$. Bettin also obtained a reciprocity relation for this second moment with a more general twist, but only up to a modest error term. To state his result, we define the twisted moment
\begin{equation}
\label{eq:twistmomdeg1}
M_{\pm}(p,r;q) = \frac{1}{\phi(q)} \sumstar_{\substack{\chi \shortmod{q}\\ \chi(-1)=\pm 1}} q^{\frac12}\left\lvert L\pb{\frac12,\chi} \right\rvert^2 \chi(p) \chibar({r}).
\end{equation}
Here and throughout the rest of the paper, we will assume that $p,q,r$ are distinct odd primes.
\begin{prop}{\cite[Corollary 3]{bettin}}
\label{prop:bet}
For distinct odd primes $p,q,r$ with $4pr \leq q$ we have,
\begin{equation}
\label{eq:con}
    M_{\pm}(p,r;q)  \mp M_{\pm}(p,q;r)  \mp M_{\pm}(r,q;p) = \frac12 \pb{\frac{q}{pr}}^{\frac12} \pb{\log\frac{q}{pr} + \gamma -\log 8\pi \mp \frac{\pi}{2}} +  O(\log{q}).
\end{equation}
\end{prop}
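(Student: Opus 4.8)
The plan is to evaluate $M_\pm(p,r;q)$ directly: expand $|L(\tfrac12,\chi)|^2$ by its approximate functional equation, carry out the sum over $\chi$, extract the diagonal (which produces the main term), and then show that the off-diagonal parts of the three moments cancel up to $O(\log q)$, this cancellation being an instance of Kloosterman reciprocity. Since the root number of $L(s,\chi)L(1-s,\ov\chi)$ equals $1$, for primitive $\chi\bmod q$ with $\chi(-1)=\pm1$ one has
\[
|L(\tfrac12,\chi)|^2 \;=\; 2\sum_{m,n\geq1}\frac{\chi(m)\ov{\chi(n)}}{\sqrt{mn}}\,V_{\pm}\!\pb{\frac{mn}{q}},
\]
with $V_\pm$ smooth, $V_\pm(y)\ll_A y^{-A}$ for $y\geq q^{\varepsilon}$, and Mellin transform $\widetilde V_\pm(s)=\pi^{-s}\,\Gamma\!\pb{\tfrac{1/2+s+\mathfrak a_\pm}{2}}^{2}\big/\Gamma\!\pb{\tfrac{1/2+\mathfrak a_\pm}{2}}^{2}$, where $\mathfrak a_+=0$, $\mathfrak a_-=1$, so that $\widetilde V_\pm(0)=1$. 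Substituting into $M_\pm(p,r;q)$ and using orthogonality of primitive characters to the prime modulus $q$, i.e.\ $\sumstar_{\chi\bmod q}\chi(a)\ov{\chi(b)}=\phi(q)\,\mathbf 1_{a\equiv b\,(q)}-\mathbf 1_{(ab,q)=1}$, together with the parity projector $\tfrac12(1\pm\chi(-1))$ — whose factor $\tfrac12$ cancels the $2$ above — gives
\[
M_\pm(p,r;q)\;=\;q^{1/2}\sum_{\substack{m,n\geq1\\(mn,q)=1}}\frac{V_\pm(mn/q)}{\sqrt{mn}}\pb{\mathbf 1_{mp\equiv nr\,(q)}\pm\mathbf 1_{mp\equiv -nr\,(q)}}\;+\;O(\log q),
\]
the error absorbing the $\mathbf 1_{(ab,q)=1}$ correction, which is $\ll q^{-1/2}\sum_{d\leq q^{1+\varepsilon}}\tau(d)d^{-1/2}\ll\log q$.

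The diagonal comes from $mp=nr$ in the first indicator (the second, $mp=-nr$, is empty for $m,n\geq1$); as $p\neq r$ are prime this forces $m=rk$, $n=pk$, so the diagonal contribution equals
\[
\frac{q^{1/2}}{\sqrt{pr}}\sum_{k\geq1}\frac{V_\pm(prk^2/q)}{k}\;=\;\frac{q^{1/2}}{\sqrt{pr}}\cdot\frac{1}{2\pi i}\int_{(2)}\widetilde V_\pm(s)\pb{\frac{q}{pr}}^{s}\zeta(1+2s)\,\frac{ds}{s}.
\]
Moving the contour past the double pole at $s=0$ (the shifted integral being of lower order), the $s^{-2}$ term contributes $\tfrac12\log\tfrac{q}{pr}$, while $\zeta(1+2s)=\tfrac1{2s}+\gamma+\cdots$ and the logarithmic derivative of $\widetilde V_\pm$ at $0$ contribute $\gamma-\log 8\pi$ together with the parity term $\mp\tfrac\pi2$, the latter issuing from $\psi(\tfrac34)-\psi(\tfrac14)=\pi$. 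The net main term is $\tfrac12\pb{q/pr}^{1/2}\pb{\log\tfrac{q}{pr}+\gamma-\log 8\pi\mp\tfrac\pi2}$, as in Proposition~\ref{prop:bet}. The analogous diagonals of $M_\pm(p,q;r)$ and $M_\pm(r,q;p)$ are $O(1)$ when $4pr\leq q$ (there the ``conductors'' $pq$ and $rq$ exceed the moduli $r$ and $p$, so the smooth cutoff annihilates the sums), hence negligible.

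The remaining, and central, task is to show that the off-diagonal $\mathrm{OD}_\pm$ of $M_\pm(p,r;q)$ — the contribution of the terms with $mp\equiv\pm nr\pmod q$ but $mp\neq\pm nr$, weighted by $q^{1/2}V_\pm(mn/q)/\sqrt{mn}$ — coincides, up to $O(\log q)$, with $\pm$ the sum of the analogous off-diagonals of $M_\pm(p,q;r)$ and $M_\pm(r,q;p)$. The combinatorial source of the relation is that a solution of $mp-nr=q\ell$ with $\ell\neq0$, read as $mp-\ell q=nr$, satisfies the off-diagonal congruence $mp\equiv\ell q\pmod r$ defining $M_\pm(p,q;r)$ and, read modulo $p$, satisfies $nr\equiv-\ell q\pmod p$ as in $M_\pm(r,q;p)$; the archimedean weights attached to a given triple in the three moments differ, but are related by Hankel-type integral transforms. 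To turn this into an identity I would detect the congruence $mp\equiv\pm nr\pmod q$ by additive characters, apply Poisson summation to the $n$-sum, and invoke the reciprocity law $\tfrac{\ov{pr}}{q}\equiv\tfrac1{pqr}-\tfrac{\ov q}{pr}\pmod1$ combined with the partial-fraction identity $\tfrac{\ov q}{pr}=\tfrac{\ov q\,\ov r}{p}+\tfrac{\ov q\,\ov p}{r}$ to convert the resulting complete exponential sum modulo $q$ into exponential sums modulo $p$ and modulo $r$; the reciprocity shift $\tfrac1{pqr}$ costs only $O(\log q)$. Reassembling the two pieces as approximate functional equations together with orthogonality modulo $p$ and modulo $r$ then reconstitutes the off-diagonals of $M_\pm(r,q;p)$ and $M_\pm(p,q;r)$, with an overall sign depending on the parity class, produced by the $\cos/\sin$ weights in the Poisson/Voronoi kernel (equivalently, by the Gamma-factors in the functional equation of $\sum_{n}e(an/q)n^{-s}$).

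\textbf{Main obstacle.} The hard part is exactly this last step: one must execute the Poisson/reciprocity transformation with enough care that the dual sums are \emph{genuinely}, not merely heuristically, the off-diagonals of the two companion moments to within $O(\log q)$. This requires (i) matching the archimedean weights through a functional-equation identity for the relevant ratio of Gamma-factors, the even and odd cases behaving differently — this is where the overall sign and the placement of $\mp\tfrac\pi2$ must be controlled — and (ii) bounding the various losses (from the reciprocity shift $1/(pqr)$, from the tails of $V_\pm$, and from the range in which the dual variable lies near the transition scale) by $O(\log q)$ under the hypothesis $4pr\leq q$. A conceptually cleaner route, which we pursue in this paper, is to recognise $M_\pm(p,r;q)$ as, in essence, a twisted first moment of $GL(2)$ $L$-functions for the degenerate (Eisenstein) spectrum and to deduce Proposition~\ref{prop:bet} from the $GL(2)$ reciprocity, so that the archimedean bookkeeping is packaged once and for all.
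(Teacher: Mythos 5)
The paper does not prove Proposition~\ref{prop:bet} at all: it is quoted verbatim from Bettin \cite{bettin} (his Corollary~3) and serves only as background motivation for the cuspidal reciprocity relations in Theorems~\ref{thm:mainthm} and~\ref{thm:mainthm2}. There is therefore no in-paper proof against which to compare your attempt, and you should be aware that a complete argument must in effect reproduce Bettin's own, which rests on an analysis of the period function of the Eisenstein series via continued fractions.

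On its own terms, your main-term computation checks out: $\widetilde V_\pm'(0)=-\log\pi+\psi\pb{\tfrac{1/2+\mathfrak a_\pm}{2}}$ together with $\psi(\tfrac14)=-\gamma-3\log 2-\tfrac{\pi}{2}$ and $\psi(\tfrac34)=-\gamma-3\log 2+\tfrac{\pi}{2}$ does give $\tfrac12(q/pr)^{1/2}\pb{\log\tfrac{q}{pr}+\gamma-\log 8\pi\mp\tfrac{\pi}{2}}$, and discarding the companion diagonals of $M_\pm(p,q;r)$ and $M_\pm(r,q;p)$ is legitimate under $4pr\le q$ (they are $O(\log q)$, not $O(1)$, but that is harmless). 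The genuine gap is exactly the step you flag as the ``remaining, and central, task'': you never carry out the Poisson/reciprocity transformation that is supposed to match the off-diagonal of $M_\pm(p,r;q)$ with $\pm$ the sum of the off-diagonals of the two companion moments, nor do you verify the archimedean weight matching, the parity-dependent sign, or the $O(\log q)$ error budget. Listing the arithmetic inputs ($\tfrac{\ov{pr}}{q}\equiv\tfrac{1}{pqr}-\tfrac{\ov q}{pr}\pmod 1$, the partial-fraction split, Poisson summation) is a plan, not a proof; as written the proposal does not establish the proposition.

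Your closing sentence is also misleading: the paper does not ``pursue'' a $GL(2)$/Eisenstein deduction of Proposition~\ref{prop:bet}. The new results here are for holomorphic and Maass Hecke cusp forms (Theorems~\ref{thm:mainthm} and~\ref{thm:mainthm2}); the Eisenstein (Dirichlet $L$-function) case is not reproved and is cited purely for context.
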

\noindent It is interesting that this relation involves three twisted moments.

We now turn to cuspidal analogues of Propositions \ref{prop:con} and \ref{prop:bet}. In the first situation, let $f$ be a holomorphic Hecke cusp form of weight $k\ge 12$ for $SL_2(\mathbb{Z})$. Let $\lambda_f(n)$ denote the Hecke eigenvalues associated to $f$, normalized so that the analytical continuation of $L(s,f)= \sum_{n\ge 1} \frac{\lambda_f(n)}{n^s}$ has critical line $\Re(s)=\frac12$. Let
\[
c_f(q) = \lambda_f(q)q^{\frac12}-2
\]
and define the twisted moment
    \begin{equation}
    \label{eq:Mfdefn}
        M_f(p,r;q) = \frac{1}{\phi(q)} \pb{ \sumstarchiq \tau(\chi) L\pb{\half, f \otimes \chibar} \chi(p) \chibar(r) +c_f(q) L\pb{\half, f}},
    \end{equation}
    where $\tau(\chi)$ denotes the Gauss sum. Note that $M_f(p,r;q)$ is the cuspidal analogue of 
    \[
    M(p,r;q)=M_+(p,r;q)+ i M_-(p,r;q)
    \]
     since $i^\eta q^\half |L(\half, \chi)|^2=\tau(\chi)L(\half, \overline{\chi})^2$, where $\eta=0$ when $\chi$ is even, and $\eta=1$ when $\chi$ is odd. 
     
     For $\alpha \in \mathbb{Q}$, define the additively twisted $L$-series associated to $f$ by
\begin{equation}
\label{eq:addtwistdef}
    L(s, f ,\alpha)= \sum_{n \geq 1}\frac{\lambda_f(n)e(n\alpha)}{n^s}
\end{equation}
for $\Re(s)>1$, which analytically continues to the rest of the complex plane with a functional equation \cite[Section 3.3]{nord2}. The central values of these $L$-series are a convenient way to package the twisted moments that we are interested in, since as we will see in Lemma \ref{lem:addtwistmoment}, we have 
\[
M_f(p,r;q) = L\pb{\half, f , \frac{\ov{p}r}{q}},
\]
where $\ov{p}$ denotes any integer such that $\ov{p} p \equiv 1 \bmod q$. This quantity is related to the concept of ``modular symbols'' (see \eqref{period} and \cite{man}). Our first main theorem is the following.
\begin{mytheo}
\label{thm:mainthm} 
   Let $f$ be a fixed holomorphic Hecke cusp form of weight $k\ge 12$ for $SL_2(\mathbb{Z})$. Let $p,q,r$ be distinct odd primes. We have
           \begin{multline}
         \label{eq:thmhol}
            M_f(p,r;q)- M_f(-q,r;p) -  M_f(-p,q;r) =\\ \sum_{j=1}^{\frac{k}{2}-1} \frac{1}{j!} \frac{\Gamma(\frac{k}{2}+j)}{\Gamma(\frac{k}{2}-j)} \pb{\frac{2\pi i q}{pr}}^{-j} \pb{L\pb{\frac12 +j, f , -\frac{\ov{p}q}{r}} + (-1)^j i^k  L\pb{\frac12 +j, f , \frac{q\ov{r}}{p}}}.  
         \end{multline} 
The $L$-series values on the right hand side are in the region of absolute convergence of $L(s, f, \alpha)$. If $pr<Cq$ for a constant $C$, then $ M_f(p,r;q)- M_f(-q,r;p) - M_f(-p,q;r) = O(\frac{pr}{q})$. 
\end{mytheo}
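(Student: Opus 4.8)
The plan is to first invoke Lemma~\ref{lem:addtwistmoment} to rewrite $M_f(p,r;q)$, $M_f(-q,r;p)$, $M_f(-p,q;r)$ as additively twisted central $L$-values of $f$ at the fractions $\overline{p}r/q$, $-\overline{q}r/p$, $-\overline{p}q/r$, where each bar is an inverse modulo the relevant prime. These three fractions are tied together by the elementary reciprocity $p\overline{p}+q\overline{q}\equiv 1 \pmod{pq}$ and its cyclic analogues, which is precisely the arithmetic statement that the three associated cusps of $SL_2(\mathbb{Z})\backslash\mathbb{H}$ are related by matrices with small entries. The second ingredient is the period-integral representation: for coprime $a,c$ and an integer $j\ge 0$,
\[
L\!\left(\tfrac{1}{2}+j,\,f\otimes e\!\left(\tfrac{a}{c}\right)\right)=\frac{(2\pi)^{j+k/2}}{\Gamma(j+\tfrac{k}{2})}\int_{0}^{\infty} f\!\left(\tfrac{a}{c}+iy\right) y^{\,j+\frac{k}{2}-1}\,dy,
\]
obtained by integrating the Fourier expansion term by term, the convergence at $y\to 0$ being guaranteed by the automorphy of $f$ under $\left(\begin{smallmatrix}a&\ast\\ c&\ast\end{smallmatrix}\right)$, which gives $f(a/c+iy)=(i/cy)^{k} f(-\overline{a}/c+i/c^{2}y)$ and hence rapid decay. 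In these terms the theorem becomes an identity for the weight-$k$ modular symbol, i.e.\ for the period polynomial $P_{f,\alpha}(X):=\int_{0}^{i\infty} f(\alpha+z)(X-z)^{k-2}\,dz$, a polynomial of degree $k-2$ whose coefficients package the values $L(\tfrac{1}{2}+j,f\otimes e(\alpha))$ for $j=-(\tfrac{k}{2}-1),\dots,\tfrac{k}{2}-1$.

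The engine of the proof is the path-additivity formalism for weight-$k$ modular symbols together with their $SL_2(\mathbb{Z})$-covariance. Choosing matrices $\gamma_{q},\gamma_{p},\gamma_{r}\in SL_2(\mathbb{Z})$ that send $i\infty$ to $\overline{p}r/q$, $-\overline{q}r/p$, $-\overline{p}q/r$ and are compatibly normalized so that the reciprocity identities above become a single relation in $SL_2(\mathbb{Z})$ among them, one feeds this relation into the additivity $\{\alpha_1,\alpha_2\}+\{\alpha_2,\alpha_3\}+\{\alpha_3,\alpha_1\}=0$ of modular symbols and breaks each symbol at $i\infty$. Since $f$ is a holomorphic cusp form (no constant term, rapid decay at all cusps) this should produce an \emph{exact} polynomial identity of the shape $P_{f,\,\overline{p}r/q}=P_{f,\,-\overline{q}r/p}\big|_{2-k}A+P_{f,\,-\overline{p}q/r}\big|_{2-k}B$ for explicit $A,B\in SL_2(\mathbb{Z})$ whose bottom rows are, up to units, $(p,\ast)$ and $(r,\ast)$.

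The final step is to extract the scalar identity by reading off the coefficient of $X^{k/2-1}$: on the left this is a constant times $L(\tfrac{1}{2},f\otimes e(\overline{p}r/q))=M_f(p,r;q)$, while expanding the slash actions $(pX+\ast)^{k-2}$ and $(rX+\ast)^{k-2}$ turns the right side into a finite combination of $L(\tfrac{1}{2}+j,f\otimes e(-\overline{q}r/p))$ and $L(\tfrac{1}{2}+j,f\otimes e(-\overline{p}q/r))$ over $j=-(\tfrac{k}{2}-1),\dots,\tfrac{k}{2}-1$. The $j=0$ terms reproduce $-M_f(-q,r;p)-M_f(-p,q;r)$; for $j<0$ one applies the functional equation of the additive twist, $\Lambda(s,f\otimes e(a/c))=i^{k}\Lambda(1-s,f\otimes e(-\overline{a}/c))$ with $\Lambda(s,f\otimes e(a/c))=(c/2\pi)^{s}\Gamma(s+\tfrac{k-1}{2})L(s,f\otimes e(a/c))$, which simultaneously replaces $-\overline{q}r/p$ by its dual $q\overline{r}/p$ and brings in the quotient $\Gamma(\tfrac{k}{2}+j)/\Gamma(\tfrac{k}{2}-j)$; this quotient vanishes for $j\ge\tfrac{k}{2}$ (the trivial zeros of $L(s,f\otimes e(\cdot))$), so the surviving range is exactly $1\le j\le\tfrac{k}{2}-1$. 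Collecting the accumulated powers of $p,q,r,2\pi,i$ into $(2\pi i q/pr)^{-j}$ and the combinatorial constants into $\tfrac{1}{j!}$ should yield \eqref{eq:thmhol}; the bound $O(pr/q)$ then follows because for $j\ge1$ the Dirichlet series $L(\tfrac{1}{2}+j,f\otimes e(\cdot))$ converges absolutely and is $O_f(1)$ uniformly in the twist, so its $j$-th term contributes $O_f((pr/q)^{j})$. I expect the genuine work to be in the middle step: pinning down the precise $SL_2(\mathbb{Z})$-relation forced by the reciprocity of the fractions, and then pushing the weight-$(2-k)$ slash bookkeeping and the functional-equation folding through carefully enough that every arithmetic factor lands exactly as in \eqref{eq:thmhol} rather than merely up to an admissible error.
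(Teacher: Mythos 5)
Your proposal takes the period-polynomial/modular-symbol route, which is genuinely different from the paper's argument, and it is essentially the approach the paper is explicitly departing from. The paper's proof never touches Eichler integrals or $SL_2(\mathbb{Z})$-covariance: it uses Lemma \ref{lem:addtwistmoment} to pass to $L(s+\tfrac12, f\otimes e(\overline{p}r/q))$, applies the CRT identity $e(n\overline{p}r/q)=e(-n\overline{q}r/p)\,e(nr/pq)$ (Lemma \ref{lem:recipexp}), opens $e(nr/pq)$ with the Mellin representation of Lemma \ref{lem:mellinrep}, shifts contours, applies the functional equation \eqref{eq:addtwistfe}, and then applies additive reciprocity a \emph{second} time to land on the third fraction $-q\overline{p}/r$. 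The exactness of \eqref{eq:thmhol} comes from evaluating a gamma-ratio Mellin integral in closed form (Lemma \ref{lem:keyintegral}), not from a polynomial identity. This sequential two-step use of reciprocity is precisely what is hard to see on the modular-symbol side, and it is why the paper's introduction says it is ``not clear to what extent the previous methods can handle general twists.'' Your method, if carried out, would be the quantum-modularity proof of Bettin--Drappeau and Nordentoft extended to two-variable twists, which so far has only been done for simple twists; the paper's method is more elementary and transfers verbatim to Maass forms.

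The concrete gap is your middle step, which you flag yourself: you need a single $SL_2(\mathbb{Z})$ relation that ties the three cusps $\overline{p}r/q$, $-\overline{q}r/p$, $-\overline{p}q/r$ into one triangle $\{\alpha_1,\alpha_2\}+\{\alpha_2,\alpha_3\}+\{\alpha_3,\alpha_1\}=0$ with bottom rows $(q,\ast),(p,\ast),(r,\ast)$, and you do not exhibit the matrices or verify they close up. The additive reciprocity you cite only gives \emph{pairwise} relations (e.g.\ $\overline{p}r/q + \overline{q}r/p \equiv r/(pq) \bmod 1$), each with a ``small'' correction fraction of denominator $pq$, $qr$, or $pr$; it is not automatic that these assemble into a three-term relation with no extra vertices, and the small-denominator cusps $r/pq$ etc.\ would generically appear as additional legs. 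The paper avoids this issue because its second application of reciprocity happens \emph{after} the functional equation and inside a Mellin integral, where the correction $e(-nq/pr)$ is absorbed exactly by the leading term $i^k x^{-2s}e^{-ix}$ of $I(x,s)$ (see \eqref{eq:I-rewrite}). A smaller issue: the functional equation as you quote it keeps the same $a/c$ on both sides; the correct statement \eqref{eq:addtwistfe} replaces $-\overline{a}/c$ by $a/c$, and this sign/inversion bookkeeping is exactly what produces $q\overline{r}/p$ versus $-\overline{q}r/p$ in the final formula, so it cannot be waved away. Until the triangle relation and the slash/functional-equation bookkeeping are written out and shown to produce precisely the coefficients $\tfrac{1}{j!}\tfrac{\Gamma(k/2+j)}{\Gamma(k/2-j)}(2\pi i q/pr)^{-j}$ with no spurious terms, this is a plan rather than a proof.
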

\noindent Note that this reciprocity relation is an exact formula (there is no error term) involving only finitely many terms. Although we assumed $r$ to be prime, we can extend the arguments in the proof of Theorem \ref{thm:mainthm} to the case $r=1$ by defining $ M_f(p,q;1) = L(f,\thalf)$ (see Lemma \ref{lem:addtwistmoment}). We thus obtain, as a corollary, a reciprocity relation with a simple twist that was previously established by Bettin and Drappeau \cite{bettdrap} and Nordentoft \cite{nord} using a different proof. To state this, define the twisted moment
    \begin{equation}
        M_f(p,q) = \frac{1}{\phi(q)} \pb{ \sumstarchiq \tau(\chi) L\pb{\half, f \otimes \chibar} \chi(p)  +c_f(q) L\pb{\half, f}}.
            \end{equation}
\begin{mycoro}
\label{maincor} 
We have
        \begin{multline}
         \label{eq:corhol}
        M_f(p,q)- M_f(-q,p) = L\pb{\half, f}  \ + \\ \sum_{j=1}^{\frac{k}{2}-1} \frac{1}{j!} \frac{\Gamma(\frac{k}{2}+j)}{\Gamma(\frac{k}{2}-j)} \pb{\frac{2\pi i q}{p}}^{-j} \pb{L\pb{\frac12+j, f} + (-1)^{j} i^k  L\pb{\half+j, f ,\frac{q}{p}}}.
        \end{multline}
\end{mycoro}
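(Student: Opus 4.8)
The plan is to obtain \eqref{eq:corhol} by specializing Theorem~\ref{thm:mainthm} to $r = 1$, with the convention $M_f(p,q;1) = L(\thalf, f)$ introduced in the remark following that theorem. This convention is consistent with the modular-symbol identity of Lemma~\ref{lem:addtwistmoment}: at modulus $1$ one has $\ov{p} \equiv 0 \pmod{1}$, so $L(\thalf, f \otimes e(\ov{p}q/1)) = L(\thalf, f \otimes e(0)) = L(\thalf, f)$. The statement of Theorem~\ref{thm:mainthm} is phrased for distinct odd primes $p,q,r$, but the only features of $r$ that the argument uses are that $r$ is invertible modulo $p$ and $q$ and that $M_f(p,r;q) = L(\thalf, f \otimes e(\ov{p}r/q))$, both of which hold trivially when $r = 1$. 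I would therefore first re-run the proof of Theorem~\ref{thm:mainthm} verbatim with $r = 1$, checking that each step --- in particular any summation formula or functional equation whose parameters involve $r$ --- remains valid (and reduces to the tame, degenerate instance) in that case, so that the exact identity \eqref{eq:thmhol} continues to hold.

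Granting this, the corollary is a matter of simplification of \eqref{eq:thmhol} at $r = 1$. On the left-hand side, $M_f(p,1;q) = M_f(p,q)$ and $M_f(-q,1;p) = M_f(-q,p)$ because $\ov{\chi}(1) = 1$ in the defining sums, while $M_f(-p,q;1) = L(\thalf, f)$ by the convention; hence the left-hand side equals $M_f(p,q) - M_f(-q,p) - L(\thalf, f)$. On the right-hand side, $\pb{\frac{2\pi i q}{pr}}^{-j} = \pb{\frac{2\pi i q}{p}}^{-j}$; the twist $e\pb{-\frac{\ov{p}q}{r}}$ reduces to $e(0) = 1$ since $\ov{p} \equiv 0 \pmod{1}$, so $L(\thalf + j, f \otimes e(-\ov{p}q/r))$ becomes $L(\thalf + j, f)$; and $e\pb{\frac{q\ov{r}}{p}}$ becomes $e\pb{\frac{q}{p}}$ since $\ov{1} \equiv 1 \pmod{p}$. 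Substituting these into \eqref{eq:thmhol} and moving the term $L(\thalf, f)$ to the right-hand side gives precisely \eqref{eq:corhol}.

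The main --- and essentially only --- obstacle is bookkeeping: confirming that the proof of Theorem~\ref{thm:mainthm} degenerates cleanly at $r = 1$, i.e. that every coprimality hypothesis is automatically satisfied and that no step silently assumed $r \geq 3$ or used the multiplicative structure of $r$. I do not expect a genuine difficulty, since the remark following Theorem~\ref{thm:mainthm} already asserts this extension; it is simply the one point that must be verified directly rather than read off the statement of the theorem.
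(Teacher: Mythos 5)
Your proposal is correct and matches the paper's implicit argument exactly: the paper obtains Corollary~\ref{maincor} precisely by extending Theorem~\ref{thm:mainthm} to $r=1$ via the convention $M_f(\cdot,\cdot;1)=L(\thalf,f)$ (justified by Lemma~\ref{lem:addtwistmoment}, since $e(n/1)=1$ for all $n$) and then simplifying the twists $e(-\ov{p}q/1)=1$ and $e(q\ov{1}/p)=e(q/p)$. Your flagged verification step---that the proof of Theorem~\ref{thm:mainthm} degenerates cleanly at $r=1$, since the only arithmetic inputs are coprimality and the additive reciprocity of Lemma~\ref{lem:recipexp}, both trivial there---is exactly the check the paper's remark asserts.
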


Now consider another situation: suppose that $g$ is a Hecke Maass cusp form for $SL_2(\mathbb{Z})$ and let $c_g(q) = \lambda_g(q)q^\half-2$ as before. We define the twisted moments
\begin{equation}
    \label{eq:Mgdefn+}
        M_g^{\pm}(p,r;q) = \frac{1}{\phi(q)} \pb{ \sumstarchipsplusminus \tau(\chi) L\pb{\half, g \otimes \chibar} \chi(p) \chibar(r) + (1-\eta_{\pm}) c_g(q) L\pb{\half, g}},
\end{equation}
where $\eta_{+} = 0$ and $\eta_{-} =1$. 
Our reciprocity relation in this case is an asymptotic series with an arbitrarily small error term.
\begin{mytheo}
    \label{thm:mainthm2}
Let $g$ be a fixed Hecke Maass cusp form for $SL_2(\mathbb{Z})$. Let $p,q,r$ be distinct odd primes with $pr<q^{1-\varepsilon}$. For any $M \in \mz_{\geq 1}$, we have 
    \begin{equation}
    \label{eq:thm2eq}      M_g^{\pm}(p,r;q) \mp  M_g^{\pm}(q,r;p) \mp  M_g^{\pm}(p,q;r) = \sum_{m=1}^{M} b^\pm_{m,g}\pb{\frac{2 \pi  q}{pr}}^{-m}+ O_M\pb{\pb{\frac{ q}{pr}}^{-M-1}},
     \end{equation}
where $b^\pm_{m,g}$ are constants depending only on $m$ and $g$. 
\end{mytheo}
\noindent Once again, we can extend these results to the case $r=1$ by defining $ M_g^{\pm}(p,q;1)=(1-\eta_\pm)  L(\half, g)$. This gives us the following corollary for a simple twist, for which we define
    \begin{equation}
        M_g^{\pm}(p,q) = \frac{1}{\phi(q)} \pb{ \sumstarchipsplusminus \tau(\chi) L\pb{\half, g \otimes \chibar} \chi(p) + (1-\eta_\pm)  c_g(q) L\pb{\half, g}}.
    \end{equation}
\begin{mycoro}
\label{maincor2} 
    For odd primes $p,q$ with $p < q^{1-\varepsilon}$ and $M \in \mz_{\geq 1}$, we have
    \begin{equation}
          M_g^{\pm}(p,q) \mp  M_g^{\pm}(q,p)  = (1-\eta_\pm)  L\pb{\half, g} + \sum_{m=1}^{M} b^\pm_{m,g}\pb{\frac{2 \pi  q}{p}}^{-m}+ O_M\pb{\pb{\frac{ q}{p}}^{-M-1}},
     \end{equation}
  where $b^\pm_{m,g}$ are constants depending only on $m$ and $g$.
\end{mycoro}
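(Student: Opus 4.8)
The plan is to deduce Corollary \ref{maincor2} from Theorem \ref{thm:mainthm2} exactly as Corollary \ref{maincor} is deduced from Theorem \ref{thm:mainthm}, by specializing $r=1$. First I would adopt the convention $M_g^{\pm}(p,q;1) = (1-\eta_\pm)L(\tfrac12,g)$ announced in the excerpt, which is the natural limiting value of the definition \eqref{eq:Mgdefn+} when the modulus $r$ degenerates to $1$: there is only the trivial character modulo $1$, the Gauss sum and the $\chibar(r)$ factor become $1$, and the term $(1-\eta_\pm)c_g(q)L(\tfrac12,g)$ is what survives of the ``diagonal'' correction. I would then note that the proof of Theorem \ref{thm:mainthm2} goes through verbatim with $r=1$ (this needs to be checked: the Voronoi/reciprocity manipulations on the additively twisted $L$-function $L(\tfrac12, g\otimes e(\overline p r/q))$ only used that the relevant fractions have denominators built from the primes in play, and setting $r=1$ merely removes one prime, which does not break coprimality or any of the estimates), so that the identity of Theorem \ref{thm:mainthm2} holds with $r=1$.

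Next I would substitute $r=1$ into the reciprocity identity. The left-hand side becomes
\begin{equation*}
M_g^{\pm}(p,1;q) \mp M_g^{\pm}(q,1;p) \mp M_g^{\pm}(p,q;1).
\end{equation*}
Now $M_g^{\pm}(p,1;q) = M_g^{\pm}(p,q)$ and $M_g^{\pm}(q,1;p) = M_g^{\pm}(q,p)$ directly from the definitions, since $\chibar(1)=1$; and $M_g^{\pm}(p,q;1) = (1-\eta_\pm)L(\tfrac12,g)$ by the convention. Hence the left side equals $M_g^{\pm}(p,q) \mp M_g^{\pm}(q,p) \mp (1-\eta_\pm)L(\tfrac12,g)$, and moving the last term to the right reproduces the $(1-\eta_\pm)L(\tfrac12,g)$ appearing in \eqref{eq:corhol}-style form. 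On the right-hand side of Theorem \ref{thm:mainthm2}, setting $r=1$ replaces $\frac{2\pi q}{pr}$ by $\frac{2\pi q}{p}$ in the asymptotic series $\sum_{m=1}^M b_{m,g}^\pm (2\pi q/(pr))^{-m}$ and in the error term; the constants $b_{m,g}^\pm$ are unchanged because they depend only on $m$ and $g$. The coprimality hypothesis $pr<q^{1-\varepsilon}$ becomes $p<q^{1-\varepsilon}$, matching the corollary's hypothesis. Assembling these substitutions yields precisely the stated formula.

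The only genuine content, and thus the main obstacle, is justifying that the proof of Theorem \ref{thm:mainthm2} is legitimately applicable when one of the three primes equals $1$ — i.e. that no step secretly used $r$ being a prime rather than just a positive integer coprime to $pq$. Concretely one should check: (i) the analogue of Lemma \ref{lem:addtwistmoment} still identifies the relevant modular symbol correctly at $r=1$, which is immediate since $L(\tfrac12, g\otimes e(\overline p/q))$ is exactly the $r=1$ instance; (ii) any appeal to the prime factorization of $pqr$ in the reciprocity/Voronoi step survives, which it does because $r=1$ only shrinks the conductor; and (iii) the asymptotic expansion's dependence on $pr$ enters solely through the single ratio $q/(pr)$, so the $r=1$ specialization is harmless. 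Given the authors' explicit remark that ``we can extend these results to the case $r=1$,'' I would present this as a short paragraph verifying these three points and then performing the substitution above, rather than re-running the whole argument.
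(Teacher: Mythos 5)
Your proposal follows the same route the paper takes: substitute $r=1$ into Theorem~\ref{thm:mainthm2}, invoke the stated convention $M_g^{\pm}(p,q;1)=(1-\eta_\pm)L(\thalf,g)$, observe $\chibar(1)=1$ so that $M_g^{\pm}(p,1;q)=M_g^{\pm}(p,q)$ and $M_g^{\pm}(q,1;p)=M_g^{\pm}(q,p)$, move the constant term across, and note $\pm(1-\eta_\pm)=(1-\eta_\pm)$ for both signs. The paper itself offers no more detail than this, so your argument is essentially what is intended.

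One small correction is worth flagging. Your parenthetical justification of the convention --- that plugging modulus $r=1$ literally into definition \eqref{eq:Mgdefn+} yields $(1-\eta_\pm)L(\thalf,g)$ because ``the Gauss sum and the $\chibar(r)$ factor become $1$'' --- is not accurate. A direct substitution gives, for the even sign, $\frac{1}{\phi(1)}\big(\tau(\chi_0)L(\thalf,g)+ (1-\eta_+)c_g(1)L(\thalf,g)\big) = L(\thalf,g) + (-1)L(\thalf,g) = 0$, since $c_g(1)=\lambda_g(1)-2=-1$. The convention is instead the correct one from the additive-twist side: the Maass analogue of Lemma~\ref{lem:addtwistmoment} identifies $M_g^{\pm}(p,r;q)$ with the even/odd part of the modular symbol $L(\thalf, g\otimes e(\ov{p}r/q))$, and when the denominator is $1$ the exponential is trivially $1$, so all the mass sits in the even part, giving $(1-\eta_\pm)L(\thalf,g)$. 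This is precisely what the paper's pointer to Lemma~\ref{lem:addtwistmoment} in the holomorphic discussion is meant to indicate. Since you adopt the convention anyway, this does not affect the validity of the remaining substitution argument, but the aside as written would not survive as a justification.
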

 
 \subsection{Comparison with previous results}
 
 As a starting point, our paper builds upon the recent work of Bainbridge, Khan, and Tang \cite{bkt}, where an asymptotic series for an untwisted version of Theorem \ref{thm:mainthm2} was established (that is, for $M_g^\pm(1,1;q)$). In this paper we introduce new ideas to be able to include twists and to obtain the exact formula (rather than an asymptotic series) in Theorem \ref{thm:mainthm}
 
For holomorphic Hecke cusp forms, Corollary \ref{maincor} was already known by the works of Bettin and Drappeau \cite[Lemma 9.3]{bettdrap} and Nordentoft \cite[Theorem 4.4 and Section 5.1]{nord}. However their results are only for a simple twist. Thus our Theorem \ref{thm:mainthm} extends their reciprocity relation to allow for general twisting. For Hecke Maass cusp forms, a reciprocity relation was previously established by Drappeau and Nordentoft in \cite[Corollary 1.7]{norddrap}, but again this only addresses the case of a simple twist, and moreover it is only valid up to a modest error term. Thus our Theorem \ref{thm:mainthm2} extends their result to general twists and allows for an arbitrarily sharp error term. 

Besides extending previous results, our paper also differs in methodology. The strategy of previous papers has been to show that the function
\[
\alpha \mapsto L\pb{\half, f,\alpha},
\]
for $\alpha \in\mathbb{Q}$, is a ``quantum modular form'' (see \cite[Definition 4.1]{nord}). While for simple twists this is proven elegantly in \cite{nord} in the holomorphic case, the proof in the Maass case as given in \cite{norddrap} is more difficult and requires additional geometric ideas. Moreover it is not clear to what extent the previous methods can handle general twists. Until now, only the work of Bettin \cite[Corollary 3]{bettin}, for the second moment of Dirichlet $L$-functions, had obtained reciprocity with general twisting, and this was only up to a modest error term. Bettin's ingenious method involved continued fraction expansions. Our new method on the other hand is relatively simple and results in an arbitrarily sharp error term. The only arithmetic inputs we will require are the Chinese Remainder Theorem, in the form of additive reciprocity (see Lemma \ref{lem:recipexp}), and the functional equations of the relevant $L$-functions. Another advantage of our new proof is that it works equally well in the holomorphic and Maass cases alike.

\begin{myremark} 
We have restricted to $GL(2)$ forms of level 1, but it would be interesting to generalize our reciprocity relations to congruence subgroups. Some results in this direction have been established in \cite{norddrap} and \cite{nord} by other methods. In order to apply our method, we would need to replace \eqref{eq:addtwistfe} by the corresponding functional equation in higher level. In general, the functional equation \cite[Theorem 3.1]{dhkl} is rather complicated and its practicability for our purposes is unclear. However, under some conditions (e.g. squarefree level coprime to $p, q, r$ with trivial nebentypus), the functional equation is much simpler (and it can also be derived by first passing to multiplicative twists as in Lemma \ref{lem:addtwistmoment}). Thus we expect our method to go through in at least some higher level settings.
%Under certain conditions (e.g. squarefree level coprime to $p, q, r$ with trivial nebentypus) one can derive a simple functional equation by first passing to multiplicative twists as in Lemma \ref{lem:addtwistmoment}, which suffice for our argument. However, when working in full generality, the functional equation \cite[Theorem 3.1]{dhkl} is rather complicated and its practicability for our purpose is unclear.
\end{myremark}

\subsection{Notation} 
\label{subsec:notations}
We use  $\sum^*$ to indicate that the sum is over primitive Dirichlet characters. We use $e(x)$ to denote $e^{2\pi i x}$. For a Dirichlet character $\chi$ modulo $q$, we write $\tau(\chi)$ for the Gauss sum $\sum_{n \mymod{q}} \chi(n)e(\frac{n}{q})$. We use $\varepsilon$ to denote an arbitrarily small positive constant, which may differ from one occurence to the next. When we write $e(\frac{\overline{a}}{b})$ for relatively prime integers $a$ and $b$, the notation $\overline{a}$ denotes an integer with $a\overline{a}\equiv 1 \mymod{b}$.

\section{Preliminaries}
\label{sec:background}
\subsection{Additive twists}
Let $f$ be a holomorphic Hecke cusp form of weight $k\ge 12$ for SL$_2(\mz)$. For $\alpha \in \mq$, the definition of $L(s, f, \alpha)$ given in \eqref{eq:addtwistdef} can be analytically continued to an entire function on $\mathbb{C}$, via the integral representation
\begin{equation}
\label{period}    L\pb{s-\frac{k-1}{2},f,\alpha} =\frac{(2\pi)^{s}}{\Gamma(s)} \int_{0}^{\infty} f(\alpha+iy)y^{s-1} dy.
\end{equation}
For relatively prime integers $a$ and $b$, we have the functional equation
\begin{equation}
\label{eq:addtwistfe}
    L\pb{\half+s, f,-\frac{\ov{a}}{b}} = i^{k} \pb{\frac{2\pi}{b}}^{2s} \frac{\Gamma(\frac{k}{2}-s)}{\Gamma(\frac{k}{2}+s)} L\pb{\half-s, f, \frac{a}{b}},
\end{equation}
We refer the reader to \cite[Section 3.3]{nord2} for further details. Additive twists are connected to the weighted moments that we study. To describe the connection, we define the entire functions
\begin{equation}
\label{eq:cfsdefn}
c_f(s,q) = \frac{\lambda_f(q)q^{\half}}{q^{s}} - \frac{1}{q^{2s}} - 1
\end{equation}
and
\begin{equation}
\label{eq:Mfspq}
    M_f(s,p,r;q)= \frac{1}{\phi(q)} \pb{\sumstarchiq \tau(\chi)L\pb{s+\half, f\otimes \chibar} \chi(p)\ov{\chi}(r) +c_f(s,q) L\pb{s+\frac12,f}}. \  
\end{equation}
Comparing with \eqref{eq:Mfdefn}, we immediately note that $c_f(0,q) =c_f(q)$, and $M_f(0,p,r;q) = M_f(p,r;q)$. We then have the following lemma.

\begin{mylemma}[Relating additive and multiplicative twists]
    \label{lem:addtwistmoment}
For all $s \in \mc$, we have
    \begin{equation}
    M_f(s,p,r;q) = L\pb{s+\half, f , \frac{\ov{p}r}{q}}.
    \end{equation}
\end{mylemma}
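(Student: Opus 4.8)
The plan is to expand the twisted moment $M_f(s,p,r;q)$ using the definition of the Gauss sum and the standard orthogonality relations for Dirichlet characters, and to recognize the resulting exponential sum as the additive twist. Starting from \eqref{eq:Mfspq}, I would first write $\tau(\chi) = \sum_{a \bmod q} \chi(a) e(a/q)$ and expand each $L$-value $L(s+\half, f\otimes\chibar) = \sum_{n\ge 1} \lambda_f(n)\chibar(n) n^{-s-\half}$ (in the region of absolute convergence, extending to all of $\mc$ by analytic continuation afterwards). Combining, the character sum over primitive $\chi \bmod q$ of $\chi(a)\chibar(n)\chi(p)\chibar(r)$ appears. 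Since $q$ is prime, the primitive characters are all characters except the principal one, so $\sumstarchiq = \sum_{\chi \bmod q} - \mathbf{1}$, and orthogonality gives $\sum_{\chi \bmod q} \chi(ap)\chibar(nr) = \phi(q)$ when $ap \equiv nr \pmod q$ and $(nr,q)=1$, and $0$ otherwise.

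The next step is bookkeeping. After dividing by $\phi(q)$, the main term becomes $\sum_{(n,q)=1} \lambda_f(n) n^{-s-\half} e(a/q)$ where $a \equiv n r \ov{p} \pmod q$, i.e. $e(n r\ov p/q)$; this is exactly $L(s+\half, f\otimes e(\ov p r/q))$ restricted to $n$ coprime to $q$. Then I would track the correction terms: the subtraction of the principal character contributes a term proportional to $\sum_{(n,q)=1}\lambda_f(n)n^{-s-\half}$, which is $L(s+\half,f)(1 - \lambda_f(q)q^{-s-\half} + q^{-1-2s})$ after removing the Euler factor at $q$ (using that $f$ has level one, so the local factor at $q$ is $(1-\lambda_f(q)q^{-s-\half}+q^{-1-2s})^{-1}$). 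Meanwhile the terms with $q \mid n$ that are missing from $L(s+\half, f\otimes e(\ov pr/q))$ (where the exponential is trivial) reconstitute $\lambda_f(q)q^{-s-\half}L(s+\half,f)$ via Hecke multiplicativity $\lambda_f(qn') = \lambda_f(q)\lambda_f(n')$ for $(n',q)=1$ — or more carefully, $L(s+\half,f) - (\text{terms with }(n,q)=1) $ telescopes against the Euler factor. The function $c_f(s,q) = \lambda_f(q)q^{\half-s} - q^{-2s} - 1$ in \eqref{eq:cfsdefn} is precisely engineered so that all these correction terms cancel, leaving exactly $L(s+\half, f\otimes e(\ov p r/q))$.

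The main obstacle, and the only genuinely delicate point, is the careful matching of the Euler-factor corrections at $q$: one must verify that the combination of (i) the $-1$ from replacing the sum over primitive characters by the sum over all characters, (ii) the contribution of the $q \mid n$ terms, and (iii) the explicit polynomial $c_f(s,q)$ multiplying $L(s+\half,f)$ all conspire to produce no leftover terms. This is a finite computation with the Euler factor $(1-\lambda_f(q)q^{-s-\half}+q^{-1-2s})$, using $\lambda_f(q^2) = \lambda_f(q)^2 - 1$ and $\lambda_f(q^j) = \lambda_f(q)\lambda_f(q^{j-1}) - \lambda_f(q^{j-2})$; one checks the identity at the level of the generating Dirichlet series in $n$ coprime to $q$ versus all $n$. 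Once this identity is established in the half-plane $\Re(s) > \half$ where everything converges absolutely, analytic continuation (both sides are entire in $s$, the left by the definition of $M_f(s,p,r;q)$ via entire $L$-functions and $c_f(s,q)$, the right by \eqref{eq:addtwistdef} and the remark following it) extends it to all $s \in \mc$. Finally, setting $s=0$ recovers the identity $M_f(p,r;q) = L(\half, f\otimes e(\ov pr/q))$ used in the introduction, and the degenerate case $r=1$ follows by noting $e(\ov p/q \cdot 1)$ still makes sense, or by the direct convention stated in the text.
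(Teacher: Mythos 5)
Your proposal follows essentially the same route as the paper: expand $\tau(\chi)$ and the $L$-series, apply orthogonality over primitive characters mod the prime $q$ to produce the additive twist $e(n\ov{p}r/q)$, account for the principal-character contribution and the $q\mid n$ terms via the Hecke recursion (equivalently, the Euler factor at $q$), observe that $c_f(s,q)$ is built precisely to absorb these corrections, and finish with analytic continuation. The only cosmetic difference is that the paper packages the Gauss-sum-plus-orthogonality step into the single identity \eqref{eq:gauss}, whereas you unfold it directly.
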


\begin{proof}
Using the orthogonality of the Dirichlet characters, we have 
       \begin{equation}
        \label{eq:gauss}
       \frac{1}{\phi(q)} \sumstarchiq \tau(\chi)\chi(m) =
        e\pb{\frac{\ov{m}}{q}} + \frac{1}{\phi(q)} 
            \end{equation}
            for $(m,q)=1$ (while the sum on the left hand side is 0 for $q|m$).
 Assume that $\Re(s) > \tfrac12$. We can use the absolute convergence of the Dirichlet series expansion and \eqref{eq:gauss} to get 
\begin{multline}
\label{eq:Mtfspq}
     \frac{1}{\phi(q)}  \sumstarchiq \tau(\chi)L\pb{ s+\frac12, f\otimes \chibar} \chi(p) \chibar(r) \\
     = \sumnq \frac{\lambda_f(n)}{n^{s+\half}} \frac{1}{\phi(q)} \sumstarchiq \tau(\chi) \chi(\ov{n}p\ov{r})  = \sumnq \frac{\lambda_f(n)}{n^{s+\half}} \pb{e{\pb{\frac{n\ov{p}r}{q}}} +\frac{1}{\phi(q)}}. 
\end{multline}
Next we want to extend the $n$-sum to all $n\geq 1$. Note that as $q$ is a prime, the Hecke eigenvalues satisfy the identity $\lambda_f(mq) = \lambda_f(m)\lambda_f(q) - \lambda_f(\frac{m}{q}) \delta(q \mid m)$. Using this, we have   
\begin{equation}
\label{eq:qdivn}
    \sum_{q \mid n} \frac{\lambda_f(n)}{n^{s+\half}} = \frac{\lambda_f(q)}{q^{s+\half}} \sum_{m \geq 1} \frac{\lambda_f(m)}{m^{s+\half}} - \frac{1}{q^{2s+1}}\sum_{q \mid m} \frac{\lambda_f({\frac{m}{q}})}{(\frac{m}{q})^{s+\half}} = \pb{\frac{\lambda_f(q)}{q^{s+\half}} - \frac{1}{q^{2s+1}} }L\pb{s+\half, f}.
\end{equation}
Combining \eqref{eq:Mtfspq} and \eqref{eq:qdivn}, and using that $e(\frac{n\ov{p}}{q})=1$ when $q \mid n$, we get
\begin{align}
\label{eq:Mtfspq2}
    \frac{1}{\phi(q)}&  \sumstarchiq \tau(\chi) L\pb{ s+\frac12, f\otimes \overline{\chi}} \chi(p)\chibar(r) \\ 
    \nonumber &=  \sumn \frac{\lambda_f(n)}{n^{s+\half}} e{\pb{\frac{n\ov{p}r}{q}}} +\frac{1}{\phi(q)}L\pb{ s+\frac12, f}- \pb{1+\frac{1}{\phi(q)}}\sum_{q \mid n} \frac{\lambda_f(n)}{n^{s+\half}} \\ 
    \nonumber &= \sumn \frac{\lambda_f(n)}{n^{s+\half}} e{\pb{\frac{n\ov{p}r}{q}}} - \frac{1}{\phi(q)} \pb{\frac{\lambda_f(q)q^{\half}}{q^{s}} - \frac{1}{q^{2s}} - 1}L\pb{ s+\frac12, f}.
\end{align}
Using the definition of $c_f(s,q)$ in \eqref{eq:cfsdefn}, we can then rewrite \eqref{eq:Mtfspq2} as
\begin{equation}
\label{eq:Mtfspq3}
    M_f(s,p,r;q) = \sumn \frac{\lambda_f(n)}{n^{s+\half}}e{\pb{\frac{n\ov{p}r}{q}}} = L\pb{s+\half, f ,\frac{\ov{p}r}{q}}.
\end{equation}
Since both $ M_f(s,p,r;q)$ and $  L(s, f, \frac{\ov{p}r}{q})$ are entire functions, \eqref{eq:Mtfspq3} must be true for all $s \in \mc$, by the principle of analytic continuation.  
\end{proof}
We will need the following standard bound.
\begin{mylemma}[Bounds on $L$-functions in the $t$-aspect]
    \label{lem:convexity}
For $s=\sigma+it \in \mc$, we have the $t$-aspect bound
    \begin{equation}
L\pb{s, f , \frac{\ov{p}r}{q}} \ll (1+|t|)^\theta,
    \end{equation}
    where $\theta=0$ for $\sigma>1$, $\theta=1-\sigma$ for $0\le \sigma \le 1$, and $\theta = 1 -2\sigma$ for $\sigma<0$. The implied constant in the bound may depend on $f, p , r, q, \sigma$. 
\end{mylemma}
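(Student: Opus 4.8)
The plan is to prove the bound in the three ranges $\sigma>1$, $\sigma<0$, and $0\le\sigma\le1$ separately, the first two by direct estimates and the third by interpolation. For $\sigma>1$ I would simply use absolute convergence of the Dirichlet series \eqref{eq:addtwistdef} together with Deligne's bound $\lambda_f(n)\ll_\varepsilon n^\varepsilon$: this gives $L(\sigma+it,f\otimes e(\ov{p}r/q))\ll_\sigma\sum_n|\lambda_f(n)|n^{-\sigma}\ll_\sigma 1=(1+|t|)^0$, with no $t$-dependence at all, which is the case $\theta=0$.

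For $\sigma<0$ I would invoke the functional equation \eqref{eq:addtwistfe}. Writing $e(\ov{p}r/q)=e(-\ov{a}/q)$ for a residue $a$ coprime to $q$ (explicitly $a\equiv -p\ov{r}\pmod q$), and applying \eqref{eq:addtwistfe} with $s$ replaced by $\sigma-\tfrac12+it$ so that $\tfrac12+s=\sigma+it$, one expresses $L(\sigma+it,f\otimes e(\ov{p}r/q))$ as $L(1-\sigma-it,f\otimes e(a/q))$ times $i^{k}(2\pi/q)^{2s}\Gamma(\tfrac{k}{2}-s)/\Gamma(\tfrac{k}{2}+s)$. Since $1-\sigma>1$ the dual value is $\ll_\sigma 1$ by the first case, the factor $(2\pi/q)^{2s}$ has modulus $(2\pi/q)^{2\sigma-1}$ (constant in $t$), and Stirling's formula $|\Gamma(x+iy)|\asymp|y|^{x-1/2}e^{-\pi|y|/2}$ gives, for $\Re s=\sigma-\tfrac12$,
\[
\abs*{\frac{\Gamma(\tfrac{k}{2}-s)}{\Gamma(\tfrac{k}{2}+s)}}\asymp_\sigma (1+|t|)^{(\frac{k}{2}-\sigma)-(\frac{k}{2}+\sigma-1)}=(1+|t|)^{1-2\sigma}.
\]
Multiplying these contributions yields the claimed exponent $\theta=1-2\sigma$.

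For the remaining range $0\le\sigma\le1$ I would apply the Phragmén--Lindelöf convexity principle to the entire function $L(s,f\otimes e(\ov{p}r/q))$ on a strip $-\delta\le\Re s\le 1+\delta$, using the boundary bounds just obtained (exponents $0$ on $\Re s=1+\delta$ and $1+2\delta$ on $\Re s=-\delta$); the interpolated exponent $1+\delta-\sigma$ then gives $\theta=1-\sigma$ on letting $\delta\to0$ (with the implied constant allowed to depend on $\sigma$; any arbitrarily small loss here is irrelevant to the way the lemma is used later). The one point requiring care — and the only real obstacle — is verifying the a priori finite-order hypothesis needed to invoke Phragmén--Lindelöf, namely that $L(s,f\otimes e(\ov{p}r/q))$ has at most order-$1$ growth in vertical strips. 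This I would extract from the integral representation displayed just before \eqref{eq:addtwistfe}: the prefactor $(2\pi)^{s}/\Gamma(s)$ is $\ll e^{\pi|t|/2}(1+|t|)^{C}$ by Stirling, while $\int_0^\infty f(\ov{p}r/q+iy)\,y^{s-1}\,dy$ is $O_\sigma(1)$ uniformly in $t$ because $f$ is a cusp form, so $f(\ov{p}r/q+iy)$ decays exponentially both as $y\to\infty$ and as $y\to0^+$ — the latter because the rational point $\ov{p}r/q$ is $SL_2(\mathbb Z)$-equivalent to the cusp at $\infty$, so conjugating $f$ by a matrix sending $\infty$ to $\ov{p}r/q$ converts the region $y\to0^+$ into a neighborhood of $i\infty$. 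Order $1$ is comfortably below the $\exp(\exp(c|t|))$ growth (with $c<\pi/(1+2\delta)$) that would obstruct the convexity principle, so the argument closes.
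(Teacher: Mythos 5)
Your proof is correct, and it differs from the paper's route in a meaningful way. The paper appeals to Lemma \ref{lem:addtwistmoment}, which expresses $L(s, f\otimes e(\ov{p}r/q))$ as a fixed finite linear combination (over $\chi\bmod q$, with coefficients depending only on $p,r,q$) of the multiplicatively twisted $L$-functions $L(s,f\otimes\chi)$, then cites the standard convexity bound for the degree-2 $L$-function $L(s,f\otimes\chi)$ on the critical strip and the functional equation of $L(s,f\otimes\chi)$ for $\sigma<0$. You instead work entirely with the additively twisted $L$-function itself: the Dirichlet series for $\sigma>1$, the additive-twist functional equation \eqref{eq:addtwistfe} for $\sigma<0$, and Phragm\'en--Lindel\"of directly on $L(s,f\otimes e(\ov{p}r/q))$ in the critical strip. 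Your route has the merit of being self-contained --- it never needs to invoke the theory of GL(2) $L$-functions twisted by Dirichlet characters --- and you correctly identify and fill the one technical point this requires, namely the finite-order growth hypothesis for Phragm\'en--Lindel\"of, which you get from the integral representation plus the exponential decay of $f$ near every rational cusp. The paper's route is shorter because it offloads this hypothesis onto the standard theory for $L(s,f\otimes\chi)$.

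One small remark that applies equally to the lemma as stated and to both proofs: a genuine convexity/Phragm\'en--Lindel\"of argument in the strip naturally produces exponent $1-\sigma+\varepsilon$ rather than a clean $1-\sigma$ (you see this in your own argument, where the interpolated exponent is $1+\delta-\sigma$ with the implied constant depending on $\delta$). This is harmless for the way Lemma \ref{lem:convexity} is used later, since it is only needed to establish absolute convergence of contour integrals with strictly more than $|t|^{-1}$ decay to spare, but it is worth being aware that the exponent as stated is slightly optimistic.
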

\begin{proof}
By definition \eqref{eq:Mfspq} and Lemma \ref{lem:addtwistmoment}, it suffices to appeal to $t$-aspect bounds for $L(s, f\otimes \chi)$. For $0\le \sigma \le 1$, we use the convexity bound for this degree 2 $L$-function. For $\sigma<0$ we use the functional equation to relate the $L$-function to values to the right of the critical strip, where it is easily bounded.
\end{proof}

We will need the following ``additive reciprocity'' formula for the exponential function, which follows from Chinese Remainder Theorem.
\begin{mylemma}[Additive reciprocity]
    \label{lem:recipexp}
    Let $n,a,b$ be integers, with $(a,b)$=1. Then, we have
    \begin{equation}
         e\pb{\frac{n\ov{a}}{b}}= e\pb{-\frac{n\ov{b}}{a}}    e\bigg(\frac{n}{ab}\bigg). 
    \end{equation}
\end{mylemma}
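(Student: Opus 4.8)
The plan is to reduce the identity to an elementary divisibility fact. Since $e(x)=e(y)$ exactly when $x-y\in\mz$, the asserted equality is equivalent to showing
\[
\frac{n\ov a}{b}+\frac{n\ov b}{a}-\frac{n}{ab}\in\mz.
\]
Putting the left-hand side over the common denominator $ab$, this is the same as the claim that $ab$ divides the integer $n\left(a\ov a+b\ov b-1\right)$, so it suffices to prove that $ab \mid a\ov a+b\ov b-1$ (here $\ov a$ is taken modulo $b$ and $\ov b$ modulo $a$, as in the notation conventions).

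To see this, reduce $a\ov a+b\ov b-1$ modulo $b$: we have $a\ov a\equiv 1\pmod b$ by definition of $\ov a$, while $b\mid b\ov b$, so the whole expression is $\equiv 1+0-1\equiv 0\pmod b$. By the symmetric computation modulo $a$ (using $b\ov b\equiv 1\pmod a$ and $a\mid a\ov a$) we get $a\ov a+b\ov b-1\equiv 0\pmod a$. Since $(a,b)=1$, the Chinese Remainder Theorem gives $ab\mid a\ov a+b\ov b-1$, and multiplying through by $n$ finishes the proof; unwinding, $e\big(\tfrac{n\ov a}{b}\big)=e\big(-\tfrac{n\ov b}{a}+\tfrac{n}{ab}\big)=e\big(-\tfrac{n\ov b}{a}\big)e\big(\tfrac{n}{ab}\big)$.

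There is no genuine obstacle here: this is just the partial-fraction decomposition of $\tfrac1{ab}$ modulo $1$, and the only care required is tracking the two congruences and the coprimality that glues them. We record it as a lemma because, as noted in the introduction, together with the functional equations of the relevant $L$-functions it is the sole arithmetic input to all of our reciprocity relations.
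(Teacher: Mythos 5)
Your proof is correct, and it fills in exactly the argument the paper leaves implicit (the paper states only that the lemma ``follows from the Chinese Remainder Theorem'' and gives no further details). Reducing the exponential identity to the integrality of $\frac{n\ov a}{b}+\frac{n\ov b}{a}-\frac{n}{ab}$, clearing denominators to get $ab\mid n(a\ov a+b\ov b-1)$, and then checking divisibility by $a$ and by $b$ separately before gluing with coprimality is the standard CRT-style argument, so you are on the same track as the authors; the computations modulo $a$ and modulo $b$ are both right, and the final step correctly uses $(a,b)=1$ to pass from $a\mid\cdot$ and $b\mid\cdot$ to $ab\mid\cdot$.
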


\subsection{Analytic tools}

We will need Stirling's approximation for the gamma function (see \cite[Equation (3.13)]{bkt}).
\begin{mylemma}[Stirling's approximation]\label{lem:stirl} Fix a compact set $\mathcal{C}\subset \{z\in \mathbb{C}: \Re(z)\ge 0\}$. For $z\in\mathcal{C}$ and $t\in\mathbb{R}$ with $|t|>\half$, we have
\begin{multline*}
\Gamma(z+it)\\
=\sqrt{2\pi} e^{-i\frac{\pi}{4}\sgn(t)} |t|^{z-\half} \exp\pb{ -\frac{\pi}{2} |t| + it\log|t| -it+iz\frac{\pi}{2}\sgn(t)} \pb{ 1+ \sum_{m=1}^{M} a_m(z)t^{-m} + E_M(z,t)},
\end{multline*}
where $a_m(z)$ and $E_M(z,t)$ are holomorphic functions of $z\in\mathcal{C}$
which depend on $\sgn(t)$ and satisfy the bounds $|a_m(z)|_M \ll 1$ and $|E_M(z,t)|\ll_M |t|^{-M-1}.$
\end{mylemma}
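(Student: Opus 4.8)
The plan is to deduce the stated expansion from the classical Stirling series for $\log\Gamma$, specialized to the vertical line $w=z+it$. Recall that for every $\delta>0$ and $N\ge 1$, uniformly in the sector $\lvert\arg w\rvert\le \pi-\delta$ one has
\[
\log\Gamma(w)=\pb{w-\thalf}\log w - w + \thalf\log 2\pi + \sum_{n=1}^{N}\frac{B_{2n}}{2n(2n-1)}\,w^{1-2n} + O_N\pb{\lvert w\rvert^{-1-2N}},
\]
with $\log$ the principal branch and $B_{2n}$ the Bernoulli numbers. Since $\Re z\ge 0$ and $\lvert t\rvert>\thalf$, the point $w=z+it$ lies in the closed right half-plane with $\lvert w\rvert\ge\lvert t\rvert>\thalf$, hence inside such a sector and away from the poles of $\Gamma$, so this identity applies.

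First I would set $R=\sup_{z\in\mathcal C}\lvert z\rvert$ and treat the range $\lvert t\rvert>2R$, where $\lvert z/(it)\rvert<\thalf$ and the principal logarithm splits convergently as $\log w=\log(it)+\log\bigl(1+\tfrac{z}{it}\bigr)$, with $\log(it)=\log\lvert t\rvert+i\tfrac{\pi}{2}\sgn(t)$ and $\log\bigl(1+\tfrac{z}{it}\bigr)=\sum_{n\ge1}\frac{(-1)^{n+1}}{n}(z/(it))^{n}=\frac{z}{it}+O(\lvert t\rvert^{-2})$. Substituting into $(w-\thalf)\log w$ and keeping only the terms that are not $O(\lvert t\rvert^{-1})$, one gets
\[
\pb{z-\thalf}\log\lvert t\rvert + iz\tfrac{\pi}{2}\sgn(t) - i\tfrac{\pi}{4}\sgn(t) + it\log\lvert t\rvert - \tfrac{\pi}{2}\lvert t\rvert + z,
\]
while $-w=-z-it$; the two copies of $z$ cancel, and after adding $\thalf\log2\pi$ the exponential of what remains is exactly $\sqrt{2\pi}\,e^{-i\pi/4\,\sgn(t)}\,\lvert t\rvert^{z-1/2}\exp\!\big(-\tfrac{\pi}{2}\lvert t\rvert+it\log\lvert t\rvert-it+iz\tfrac{\pi}{2}\sgn(t)\big)$, i.e.\ precisely the claimed main factor. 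Everything else — the remaining terms of $(w-\thalf)\log w$ beyond the leading ones, together with the Bernoulli sum $\sum_{n=1}^{N}\frac{B_{2n}}{2n(2n-1)}(it)^{1-2n}\bigl(1+z/(it)\bigr)^{1-2n}$ — assembles into a power series $P_N(z,t)$ in $t^{-1}$ whose coefficients are polynomials in $z$ (hence holomorphic on $\mathcal C$), plus an error $O_N(\lvert t\rvert^{-1-2N})$.

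Exponentiating, $\exp\bigl(P_N(z,t)\bigr)=1+\sum_{m\ge1}a_m(z)t^{-m}$ with $a_m$ polynomials in $z$ obtained by the standard combinatorics; choosing $N$ with $2N\ge M$, truncating at $m=M$, and applying Taylor's theorem with remainder (using $\lvert P_N(z,t)\rvert\ll_N\lvert t\rvert^{-1}$ and $\sup_{\mathcal C}\lvert a_m\rvert\ll_m1$) yields
\[
\Gamma(z+it)=\sqrt{2\pi}\,e^{-i\pi/4\,\sgn(t)}\,\lvert t\rvert^{z-1/2}\,\exp\!\big(-\tfrac{\pi}{2}\lvert t\rvert+it\log\lvert t\rvert-it+iz\tfrac{\pi}{2}\sgn(t)\big)\Bigl(1+\sum_{m=1}^{M}a_m(z)t^{-m}+E_M(z,t)\Bigr),
\]
with $E_M(z,t)\ll_M\lvert t\rvert^{-M-1}$ uniformly for $z\in\mathcal C$ and $\lvert t\rvert>2R$; $E_M$ is holomorphic in $z$ because it equals the ratio of $\Gamma(z+it)$ to the nonvanishing, holomorphic main factor, minus a polynomial in $z$. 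For the leftover range $\thalf<\lvert t\rvert\le 2R$ I would simply \emph{define} $E_M$ by the same equation: it is holomorphic in $z$ on $\mathcal C$ and jointly continuous in $(z,t)$ on the relevant compact set (with $\sgn t$ locally constant), hence bounded there by some $C_M$, and since $\lvert t\rvert^{-M-1}\ge(2R)^{-M-1}$ on this range the bound $E_M\ll_M\lvert t\rvert^{-M-1}$ still holds. Combining the two ranges gives the lemma.

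I expect the main bookkeeping obstacle to be tracking the principal-branch logarithms of $it$ and of $1+z/(it)$ consistently for both signs of $t$ (this is the source of the stated $\sgn(t)$-dependence of $a_m$ and $E_M$), and verifying cleanly that the non-leading part of $(w-\thalf)\log w$ recombines with the Bernoulli sum into an honest power series in $t^{-1}$ with holomorphic coefficients; the exponentiation and the remainder estimate are then routine.
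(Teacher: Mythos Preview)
Your derivation is the standard route and is essentially correct: expand the classical Stirling series for $\log\Gamma(w)$ at $w=z+it$, peel off the main factor by writing $\log w=\log(it)+\log(1+z/(it))$, collect the remainder as a power series in $t^{-1}$ with polynomial (hence holomorphic, bounded on $\mathcal C$) coefficients, exponentiate, and patch the bounded range $\tfrac12<|t|\le 2R$ by compactness. The bookkeeping you flag (tracking $\sgn(t)$ through $\log(it)$ and $(it)^{-m}$, and checking that the non-leading part of $(w-\thalf)\log w$ really organizes into such a series) is exactly the content of the argument, and your outline handles it.

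There is nothing to compare against in the paper: the lemma is not proved there but simply quoted, with a pointer to \cite[Equation (3.13)]{bkt}. So your proposal supplies a proof where the paper gives none; the method you use is the expected one and would match what lies behind the cited reference.

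One small caveat worth recording (arguably a defect of the lemma's hypotheses rather than of your proof): if $\mathcal C$ contains a purely imaginary point $iy$ with $|y|>\tfrac12$, then at $t=-y$ one has $z+it=0$, a pole of $\Gamma$, and the formula and your compactness argument for $E_M$ break down at that single $(z,t)$. Your large-$|t|$ argument already avoids this (since $|t|>2R$ forces $z+it\ne 0$), and in the paper's applications the relevant $z$ always have $\Re z>0$, so this does not affect anything downstream.
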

\noindent We will often use the upper bound $|\Gamma(z+it)|\ll_{z} (1+|t|)^{\Re(z)-\half} \exp( -\frac{\pi}{2} |t|)$ for $\Re(z)\not\in\mathbb{Z}_{\le 0}$.

We will also need a stationary phase expansion of oscillatory integrals (see \cite[Proposition 8.2]{bky}). 
\begin{mylemma}[Stationary phase]\label{lem:statph} 
Fix $W(y)$ a smooth function of $y\in \mathbb{R}$ with compact support. For a parameter $x>0$, let $\phi_x(y)$ be a function which is smooth on $\mathrm{supp}(W)$, where exists a unique point $y_0$ with $\phi_x'(y_0)=0$, and which satisfies $\phi_x^{\prime\prime}(y)\gg x$ and $\phi_x^{(j)}(y)\ll x$ for all $j\in \mathbb{Z}_{\ge 1}$. Then
\[
\int_{-\infty}^{\infty} W(y) e^{i\phi_x(y)} dy = \pb{\frac{2\pi}{\phi_x^{\prime\prime}(y_0)}}^\half e^{i\phi_x(y_0)+i\frac{\pi}{4}} \sum_{m=0}^{100M} \frac{1}{m!} \pb{\frac{i}{2\phi_x^{\prime\prime}(y_0)}}^m V_x^{(2m)}(y_0) + E_M(x),
\]
where
\[
V_x(y)=W(y)\exp\pb{ i \pb{ \phi_x(y)-\phi_x(y_0)-\half \phi_x^{\prime \prime}(y_0)(y-y_0)^2}}
\]
and $E_M(x)\ll_M x^{-M}$.
\end{mylemma}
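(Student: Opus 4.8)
\textbf{Proof plan for the stationary phase lemma (Lemma \ref{lem:statph}).}

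The plan is to follow the standard recipe for localizing and Taylor-expanding an oscillatory integral around its unique stationary point, as in \cite[Proposition 8.2]{bky}. First I would reduce to a neighborhood of $y_0$: since $W$ has compact support and $\phi_x'$ vanishes only at $y_0$, on the complement of a small fixed interval $(y_0-\delta, y_0+\delta)$ we have $\phi_x'(y)\gg x\delta$ (using $\phi_x''\gg x$, so $\phi_x'$ is monotone and grows linearly away from $y_0$), while all derivatives of $\phi_x$ are $\ll x$. Repeated integration by parts against $e^{i\phi_x(y)}$, each step gaining a factor of roughly $(x\delta)^{-1}$ at the cost of differentiating $W/\phi_x'$ (which stays $\ll 1$ in the relevant parameters), shows this outer piece is $\ll_{M,\delta} x^{-M}$ for any $M$. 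So up to an admissible error we may insert a fixed smooth bump $\rho(y)$ supported near $y_0$ with $\rho\equiv 1$ on a smaller neighborhood, replacing $W$ by $W\rho$.

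Next, on the localized integral I would substitute $y = y_0 + u/\sqrt{\phi_x''(y_0)}$ to normalize the quadratic term. Writing $\phi_x(y) - \phi_x(y_0) = \tfrac12\phi_x''(y_0)(y-y_0)^2 + (\text{cubic and higher})$, the integral becomes $(\phi_x''(y_0))^{-1/2} e^{i\phi_x(y_0)} \int e^{iu^2/2}\, V_x(y_0 + u/\sqrt{\phi_x''(y_0)})\, du$, where $V_x$ is exactly the function defined in the statement (the amplitude with the pure quadratic phase removed). The key point is that the higher-order phase fluctuation inside $V_x$, when expressed in $u$, is $O(x \cdot (x^{-1/2})^3) = O(x^{-1/2})$ together with all its derivatives, so $V_x(y_0 + u/\sqrt{\phi_x''(y_0)})$ is a smooth amplitude that is uniformly bounded on a growing window. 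Then I would invoke the classical asymptotic expansion of the Fresnel-type integral $\int e^{iu^2/2} \Psi(u)\, du$: Taylor-expanding $\Psi$ at $u=0$ and using $\int e^{iu^2/2} u^{2m}\, du = \sqrt{2\pi}\, e^{i\pi/4} i^m (2m-1)!!$ (odd powers integrate to zero), one obtains the sum $\sqrt{2\pi}\,e^{i\pi/4}\sum_{m\ge 0} \frac{1}{m!}(\tfrac{i}{2})^m \Psi^{(2m)}(0) + (\text{remainder})$. Translating $\Psi^{(2m)}(0)$ back to $(\phi_x''(y_0))^{-m} V_x^{(2m)}(y_0)$ and absorbing the $(\phi_x''(y_0))^{-1/2}$ reproduces the claimed main term $\pb{\frac{2\pi}{\phi_x''(y_0)}}^{1/2} e^{i\phi_x(y_0)+i\pi/4}\sum_{m=0}^{100M}\frac{1}{m!}\pb{\frac{i}{2\phi_x''(y_0)}}^m V_x^{(2m)}(y_0)$.

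The step I expect to require the most care is controlling the remainder in the Fresnel expansion with explicit dependence on $x$, so that truncating at $m = 100M$ genuinely yields an error $\ll_M x^{-M}$. Each derivative $V_x^{(2m)}(y_0)$ carries a factor $\ll (\phi_x''(y_0))^{-m} \ll x^{-m}$ after the rescaling, so the $m$-th term is of size $x^{-m}$; truncating the Taylor expansion of $\Psi$ at order $200M$ leaves a Taylor remainder controlled by $\sup|\Psi^{(200M+1)}|$ times a convergent Fresnel integral, which is $\ll_M x^{-(100M+1/2)}$ — comfortably within $x^{-M}$. One must also check the tail of the $u$-integral (beyond the support of the rescaled bump, whose width is $\sim \delta\sqrt{\phi_x''(y_0)} \gg \sqrt{x}$) contributes $\ll x^{-M}$ by the same non-stationary integration-by-parts argument, now in the $u$ variable where the phase $u^2/2$ has derivative $\gg \sqrt{x}$ on that range. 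Assembling the localization error, the Fresnel tail, and the Taylor remainder into the single $E_M(x) \ll_M x^{-M}$ completes the proof; the book-keeping of which constants depend on $M$ versus on the fixed data $W, \delta$ is the only genuinely fussy part.
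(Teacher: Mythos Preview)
The paper does not give its own proof of this lemma; it is stated as a quotation of \cite[Proposition~8.2]{bky}. Your outline is exactly the standard argument underlying that cited result (non-stationary localization by repeated integration by parts, rescaling $y=y_0+u/\sqrt{\phi_x''(y_0)}$ to normalize the quadratic phase, then a Fresnel/Taylor expansion of the amplitude), so your approach agrees with the paper's source.

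One small correction worth making when you write this up in detail: it is not true that $V_x^{(2m)}(y_0)\ll x^{-m}$. In fact $V_x^{(2m)}(y_0)$ can be as large as a positive power of $x$, since differentiating $e^{ir_x}$ brings down factors $\phi_x^{(j)}(y_0)\ll x$. What one actually bounds is the rescaled quantity $\Psi^{(2m)}(0)=(\phi_x''(y_0))^{-m}V_x^{(2m)}(y_0)$: using $r_x'(y_0)=r_x''(y_0)=0$ and $r_x^{(j)}(y_0)\ll x$ for $j\ge 3$, each monomial in $\Psi^{(2m)}(0)$ is a product $\prod_l \lambda^{1-j_l/2}$ with $\sum j_l=2m$ and all $j_l\ge 3$, hence $\ll_m x^{-m/3}$. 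This weaker decay (not $x^{-m}$) is why the sum is truncated at $100M$ rather than at $M$; with that truncation the Taylor remainder is comfortably $\ll_M x^{-M}$. You already flagged this step as the delicate one, and the bookkeeping is precisely what \cite{bky} carries out.
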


\subsection{Integral transforms and calculations}

We list some important lemmas associated to some integral transforms we will encounter later. 

\begin{mylemma}[Mellin transform] 
\label{lem:mellinrep} 
    Let $d \in (0,1)$. For $x > 0$ we have
    \begin{equation}
    \label{eq:expmellin}
    e\left(x\right) = \frac{1}{2\pi i}\int_{(-d)}  \Gamma(w) e^{i \frac{\pi}{2} w } (2\pi x)^{-w} dw + 1.
    \end{equation}
    This integral is absolutely convergent if $d \in (\half,1)$.
\end{mylemma}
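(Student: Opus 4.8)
The plan is to deduce the identity from the classical Mellin--Barnes representation of the exponential, applied to an argument lying just off the negative imaginary axis, and then to let that argument rotate onto the imaginary axis. Recall that for $\Re(\zeta)>0$ and any real $c>0$ one has $e^{-\zeta}=\frac{1}{2\pi i}\int_{(c)}\Gamma(w)\zeta^{-w}\,dw$; this follows from Mellin inversion applied to $\Gamma(w)=\int_0^\infty e^{-t}t^{w-1}\,dt$ for real $\zeta>0$ together with analytic continuation in $\zeta$ to the right half-plane, and the integral converges absolutely since $|\Gamma(c+it)|\ll(1+|t|)^{c-\frac12}e^{-\frac{\pi}{2}|t|}$ while $|\zeta^{-w}|$ grows only exponentially along the line. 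I would apply this with $\zeta=\zeta_\delta:=2\pi x\,e^{i(\delta-\pi/2)}$ for a small parameter $\delta\in(0,\pi/2)$, so that $\Re(\zeta_\delta)=2\pi x\sin\delta>0$, $\zeta_\delta^{-w}=(2\pi x)^{-w}e^{i(\pi/2-\delta)w}$, and, crucially, $\zeta_\delta\to -2\pi i x$ as $\delta\to0^+$, so $e^{-\zeta_\delta}\to e(x)$.

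Next, for fixed $\delta>0$, I would shift the contour from $\Re(w)=c$ to $\Re(w)=-d$ with $d\in(0,1)$. On the line $\Re(w)=-d$ one has $|\Gamma(w)\zeta_\delta^{-w}|\ll_{x}(1+|\Im(w)|)^{-d-\frac12}e^{-\frac{\pi}{2}|\Im(w)|}\cdot e^{(\frac{\pi}{2}-\delta)|\Im(w)|}\ll_{x}(1+|\Im(w)|)^{-d-\frac12}e^{-\delta|\Im(w)|}$, and the same computation shows that the horizontal segments at height $\pm T$ contribute $O_{x}(T^{O(1)}e^{-\delta T})\to 0$ as $T\to\infty$. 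The only pole crossed is the simple pole of $\Gamma$ at $w=0$, where $\operatorname{Res}_{w=0}\Gamma(w)\zeta_\delta^{-w}=1$. Hence for every $\delta\in(0,\pi/2)$ and every $d\in(0,1)$
\[
e^{-\zeta_\delta}=1+\frac{1}{2\pi i}\int_{(-d)}\Gamma(w)\zeta_\delta^{-w}\,dw .
\]

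Finally I would let $\delta\to0^+$. When $d\in(\tfrac12,1)$ the bound above dominates the integrand, uniformly for $\delta$ near $0$, by $C(x)(1+|\Im(w)|)^{-d-\frac12}$, which is integrable, so dominated convergence yields $e(x)=1+\frac{1}{2\pi i}\int_{(-d)}\Gamma(w)e^{i\frac{\pi}{2}w}(2\pi x)^{-w}\,dw$ together with the asserted absolute convergence. For general $d\in(0,1)$, Stirling's approximation (Lemma \ref{lem:stirl}) shows that along $\Re(w)=-d$ the function $\Gamma(w)e^{i\frac{\pi}{2}w}(2\pi x)^{-w}$ has modulus $\asymp_{x}(1+|\Im(w)|)^{-d-\frac12}$ with a phase whose derivative tends to $\infty$, so the improper integral converges; a further contour shift from $\Re(w)=-d'$ with $d'\in(\tfrac12,1)$ to $\Re(w)=-d$ crosses no pole and has vanishing horizontal contributions by the same Stirling bound, so the value is unchanged and the identity holds for all $d\in(0,1)$. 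The one delicate point is this last passage to the limit when $d\le\tfrac12$: there the target integral is only conditionally convergent, so dominated convergence is not directly available, which is exactly why the proof is organized as the range $(\tfrac12,1)$ first and a pole-free contour shift afterwards.
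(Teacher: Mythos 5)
Your proof is correct. The paper itself gives no details, deferring entirely to \cite[Lemma~7]{bkt}; your argument — start from the classical Mellin--Barnes formula $e^{-\zeta}=\frac{1}{2\pi i}\int_{(c)}\Gamma(w)\zeta^{-w}\,dw$ for $\Re(\zeta)>0$, rotate $\zeta$ from $\arg\zeta=\delta-\tfrac{\pi}{2}$ onto the negative imaginary axis, shift past the pole at $w=0$, use dominated convergence to take $\delta\to 0^+$ on the line $\Re(w)=-d$ for $d\in(\tfrac12,1)$, and then reach $d\in(0,\tfrac12]$ via a pole-free contour shift justified by the Stirling phase — is the standard route and almost certainly the same one taken in the cited lemma. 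Your correct observation that absolute convergence fails for $d\le\tfrac12$ (the modulus on the lower half of the line is only $\asymp(1+|t|)^{-d-\frac12}$), which forces the two-stage structure, matches the threshold asserted in the lemma.
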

\begin{proof}
    The proof is just like \cite[Lemma 7]{bkt}. 
    \end{proof}

\begin{mylemma}[A residue calculation] 
\label{lem:residues}
    Let $x>0, \alpha \in \mq$ and $s, w \in \mc$. For $n \in \mathbb{Z}_{\geq 0}$, we have
    \begin{equation}
\underset{w=-n}{\mathrm{Res}}  \  \Gamma(w) e^{i \frac{\pi}{2} w }  x^w L\pb{\half+s+w, f, \alpha} = \frac{i^n}{n!} x^{-n}  L\pb{\half+s-n, f , \alpha}.
  \end{equation}
Furthermore, the residue is $0$ if and only if there is no pole. 
\end{mylemma}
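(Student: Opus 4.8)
The plan is to read off the residue directly from the factorization of the integrand. Among the four factors $\Gamma(w)$, $e^{i\frac{\pi}{2}w}$, $x^w$, and $L(\tfrac12+s+w, f\otimes e(\alpha))$, only the gamma factor is singular at $w=-n$: the exponential $e^{i\frac{\pi}{2}w}$ and the power $x^w = e^{w\log x}$ (recall $x>0$ is fixed) are entire in $w$, and $L(\tfrac12+s+w, f\otimes e(\alpha))$ is entire in $w$ by the analytic continuation recorded in \eqref{eq:addtwistdef} and \eqref{eq:addtwistfe}. Hence the integrand has at worst a simple pole at $w=-n$, inherited from the classical simple pole of $\Gamma$ at the non-positive integers, where $\mathrm{Res}_{w=-n}\,\Gamma(w)=(-1)^n/n!$.

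Next I would apply the elementary fact that if $g$ is holomorphic at $w_0$ and $\Gamma$ has a simple pole there, then $\mathrm{Res}_{w=w_0}\big(\Gamma(w)g(w)\big)=g(w_0)\,\mathrm{Res}_{w=w_0}\Gamma(w)$. Taking $g(w)=e^{i\frac{\pi}{2}w}x^w L(\tfrac12+s+w, f\otimes e(\alpha))$ and $w_0=-n$ gives
\[
\underset{w=-n}{\mathrm{Res}}\ \Gamma(w)\, e^{i\frac{\pi}{2}w}\, x^w\, L\pb{\tfrac12+s+w, f\otimes e(\alpha)} = \frac{(-1)^n}{n!}\, e^{-i\frac{\pi}{2}n}\, x^{-n}\, L\pb{\tfrac12+s-n, f\otimes e(\alpha)}.
\]
Then I would simplify the constant: since $e^{-i\pi/2}=-i$, we have $e^{-i\pi n/2}=(-i)^n$, so $(-1)^n e^{-i\pi n/2}=\big((-1)(-i)\big)^n=i^n$, which turns the right-hand side into $\frac{i^n}{n!}x^{-n}L(\tfrac12+s-n, f\otimes e(\alpha))$, exactly as claimed. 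The parenthetical remark in the statement is then automatic: if $L(\tfrac12+s-n, f\otimes e(\alpha))=0$ the right-hand side vanishes, consistent with the fact that a zero of the $L$-factor cancels the simple pole of $\Gamma$, leaving the integrand holomorphic at $w=-n$.

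There is essentially no obstacle here; the only genuine input is the entirety of the additively twisted $L$-function in the shift variable (already established via the integral representation), which is what guarantees that no spurious pole appears and that the pole at $w=-n$ is simple apart from the explicitly noted possible cancellation. The rest is bookkeeping with the residues of $\Gamma$ and with powers of $i$.
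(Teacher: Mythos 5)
Your proof is correct and follows the same route as the paper: isolate the simple pole of $\Gamma(w)$ at $w=-n$ with residue $(-1)^n/n!$, evaluate the entire factor $e^{i\pi w/2}x^w L(\tfrac12+s+w,f\otimes e(\alpha))$ there, and simplify $(-1)^n e^{-i\pi n/2}=i^n$. The only difference is cosmetic: you carry out the power-of-$i$ bookkeeping explicitly, which the paper leaves implicit.
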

\begin{proof}
   This follows from the fact that  $\Gamma(w)$ has a simple pole at $w=-n$ for $n \in \mathbb{Z}_{\geq 0}$, with residue given by 
 \begin{align}
 \label{eq:gammares}
 \underset{w=-n}{\mathrm{Res}} \ \Gamma(w) = \frac{(-1)^n}{n!},
 \end{align}
    while $e^{i \frac{\pi}{2} w }  x^w L(\half + s+w, f, \alpha)$ is entire in $w$.  Thus $\Gamma(w) e^{i \frac{\pi}{2} w }  x^w L\pb{\half+s+w, f, \alpha}$ either has a simple pole at $w=-n$ or is holomorphic there, depending on whether $L(\half+s-n, f, \alpha)$ vanishes.
        \end{proof}

Define the sets
\begin{align}
\label{eq:setsdef} &\mcs_1 = \left\{ s \in \mathbb{C} : -1 < \Im(s) < 1,\; \frac{19}{16} < \Re(s) < \frac{11}{8} \right\},\\
\nonumber &\mcs_2 = \left\{ s \in \mathbb{C} : -1 < \Im(s) < 1,\; -\frac{1}{10} < \Re(s) <  \frac{11}{8} \right\}.
\end{align}

\begin{mylemma}[An approximate evaluation of an integral]
\label{lem:statphase}
Let $k\ge 12$ be a fixed even integer. For $s\in \mcs_1$ and $x>0$, we have
\begin{align}
\label{eq:int-def}  I(x,s):= \frac{1}{2\pi i} \int_{(-\frac{15}{8})} \Gamma(w) e^{i \frac{\pi}{2} w} x^{w} \frac{\Gamma(\frac{k}{2}-s-w)}{\Gamma(\frac{k}{2}+s+w)}dw = i^k x^{-2s}e^{-ix}  + Q(x,s),
\end{align}
for some function $Q(x,s)$ which is holomorphic for $s$ in the larger set $\mcs_2$ and satisfies the bound 
\begin{equation}
\label{eq:Qdecay} Q(x,s) \ll x^{-2\Re(s)-1} + x^{-\frac{15}{8}}.
\end{equation}
\end{mylemma}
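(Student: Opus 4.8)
The integrand of $I(x,s)$ has poles coming from $\Gamma(w)$ at $w = 0, -1, -2, \dots$ and from $\Gamma(\tfrac{k}{2}-s-w)$ at $w = \tfrac{k}{2}-s, \tfrac{k}{2}-s+1, \dots$; since $\Re(s) > 1$ and $k \ge 12$, these latter poles lie to the right of $\Re(w) = -\tfrac{15}{8}$ but far from it, and we will not cross them. The idea is to evaluate the integral by understanding the behavior of the integrand and not merely shifting contours, because the claimed main term $i^k x^{-2s} e^{-ix}$ is \emph{not} a residue of any single pole — it arises from the oscillatory tail of the $w$-integral. So the first step is to insert Stirling's approximation (Lemma \ref{lem:stirl}) for the ratio $\Gamma(\tfrac{k}{2}-s-w)/\Gamma(\tfrac{k}{2}+s+w)$ along the line $\Re(w) = -\tfrac{15}{8}$, writing $w = -\tfrac{15}{8} + it$. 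Since $\tfrac{k}{2} - s - w$ and $\tfrac{k}{2}+s+w$ both have real parts bounded away from the poles of $\Gamma$, Stirling gives
\[
\frac{\Gamma(\tfrac{k}{2}-s-w)}{\Gamma(\tfrac{k}{2}+s+w)} = |t|^{-2(s+w)} e^{-i\pi \sgn(t)(s+w) \cdot (\text{sign factor})}\bigl(1 + \text{lower order}\bigr),
\]
and combining with $\Gamma(w) e^{i\pi w/2} \ll |t|^{-15/8 - 1/2} e^{-\pi|t|/2}$ from Stirling again, one sees the integrand decays rapidly — but there is a subtlety: the combination of the $e^{i\pi w/2}$ factor with the phase $e^{-i\pi\sgn(t)(s+w)/2 \cdot 2}$-type term from the ratio produces cancellation of the $e^{-\pi|t|/2}$ decay on one side, leaving a genuinely oscillatory contribution.

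\medskip

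\textbf{Main steps.} (1) Substitute Stirling's expansion for the gamma ratio; the leading term produces an integral of the shape $\int W(t) |t|^{-2(s+w)} x^{w} e^{i(\text{phase in } t)}\, dt$ after a smooth partition cutting out $|t| \le 1$ (the bounded part, which is $O(x^{-15/8})$, feeds into $Q$). (2) On the range $|t| \gg 1$, change variables to make the phase explicit: the phase is of the form $\pm 2t\log|t| \mp 2t + (\text{linear in } t)$, and after the substitution the oscillatory integral becomes a stationary phase integral in the parameter $x$; apply Lemma \ref{lem:statph} with $\phi_x$ having a stationary point at $|t| \asymp x$. (3) The stationary phase evaluation produces the main term $i^k x^{-2s} e^{-ix}$ (the phase at the stationary point evaluates to $-x$, the amplitude at the stationary point gives $x^{-2s}$ after tracking the $|t|^{-2s-2w} x^w$ factor, and the constants $\sqrt{2\pi/\phi_x''}$, $e^{i\pi/4}$, together with the $i^k$ from the sign conventions in Stirling combine correctly). (4) All lower-order terms in Stirling, the non-stationary contributions, and the $|t| \le 1$ part are collected into $Q(x,s)$; one checks these give $\ll x^{-2\Re(s)-1} + x^{-15/8}$, the first term coming from the first correction term in the stationary phase expansion (which carries an extra $x^{-1}$ relative to the main term) and in Stirling, the second from the bounded range. (5) Finally, holomorphy of $Q$ in the larger set $\mcs_2$: since $I(x,s)$ is manifestly holomorphic in $s$ wherever the contour at $\Re(w) = -\tfrac{15}{8}$ avoids poles of $\Gamma(\tfrac{k}{2}-s-w)$ — which holds for $\Re(s) < \tfrac{k}{2} + \tfrac{15}{8}$, in particular on all of $\mcs_2$ — and since $i^k x^{-2s} e^{-ix}$ is entire in $s$, the difference $Q = I - i^k x^{-2s} e^{-ix}$ is holomorphic on $\mcs_2$; the bound, proven a priori on $\mcs_1$, extends to $\mcs_2$ by the maximum principle applied to $x^{2s} Q$ (or simply by rerunning the stationary phase analysis, which is valid on the larger range as well).

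\medskip

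\textbf{Anticipated main obstacle.} The delicate point is step (2)--(3): correctly bookkeeping \emph{all} the phases and constants so that the main term comes out exactly as $i^k x^{-2s} e^{-ix}$ with no stray factor of $i$, sign, or power of $2\pi$. The $e^{i\pi w/2}$ in the Mellin kernel, the $e^{-i\pi\sgn(t)z/2}$-type factors in Stirling's formula for both gamma functions (with opposite signs of the argument, hence partially cancelling), the $e^{\pm i\pi/4}$ from Stirling and from stationary phase, and the $i^k$ from the functional-equation normalization all conspire, and getting the cancellation of the $e^{-\pi|t|/2}$ decay exactly right (so that the integral is oscillatory rather than rapidly decaying) requires care. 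A secondary technical nuisance is justifying the smooth cutoff and the application of Lemma \ref{lem:statph}: strictly speaking the integrand is not compactly supported in $t$, so one must first truncate at $|t| \le x^{1+\varepsilon}$ (the tail being negligible by the $e^{-\pi|t|/2}$-type decay after accounting for cancellation, which actually only removes \emph{part} of the decay so a polynomial amount of genuine decay remains) before invoking stationary phase on a compactly supported piece. These are routine but must be done carefully to land the stated error $x^{-2\Re(s)-1} + x^{-15/8}$ rather than something weaker.
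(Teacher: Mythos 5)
Your overall plan—insert Stirling's expansion for the gamma factors on the line $\Re(w)=-\tfrac{15}{8}$, observe that the exponential decay cancels on one side of the line leaving a genuine oscillatory integral, and extract the main term $i^kx^{-2s}e^{-ix}$ by stationary phase with a stationary point near $|t|\asymp x$—is exactly the route taken in the paper (which in turn follows \cite[Section~5]{bkt}). The phase bookkeeping you flag as the ``anticipated main obstacle'' does indeed work out, and the dyadic partition you would need for the non-stationary ranges is also what the paper uses.

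However, your argument for holomorphy of $Q$ on $\mcs_2$ has a genuine gap. You claim that ``$I(x,s)$ is manifestly holomorphic in $s$ wherever the contour at $\Re(w)=-\tfrac{15}{8}$ avoids poles of $\Gamma(\tfrac{k}{2}-s-w)$ \ldots in particular on all of $\mcs_2$.'' This is false: avoiding poles is not the constraint. On the line $\Re(w)=-\tfrac{15}{8}$, Stirling gives $\bigl|\Gamma(w)e^{i\pi w/2}\Gamma(\tfrac{k}{2}-s-w)/\Gamma(\tfrac{k}{2}+s+w)\bigr|\asymp |t|^{11/8-2\Re(s)}$ for $t\to -\infty$ (the $e^{-\pi|t|/2}$ decay on that side is exactly cancelled, as you yourself observe). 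Absolute convergence of the defining integral therefore requires $\tfrac{11}{8}-2\Re(s)<-1$, i.e.\ $\Re(s)>\tfrac{19}{16}$—which is precisely why $\mcs_1$ is defined that way—and the integral diverges on most of $\mcs_2$ (e.g.\ at $\Re(s)=0$). So $I(x,s)$ is \emph{not} defined on $\mcs_2$ by the formula \eqref{eq:int-def}, and one cannot conclude that $Q=I-i^kx^{-2s}e^{-ix}$ is holomorphic there by differencing entire functions. Your parenthetical fallback (``rerunning the stationary phase analysis, which is valid on the larger range'') points at the correct fix, but it is not merely an alternative: it is the \emph{only} way to get holomorphy. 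The correct argument is that after applying Stirling and stationary phase, $Q$ decomposes into a finite sum of explicit terms, each holomorphic for $s\in\mcs_2$, plus a remainder which, once enough terms of both expansions are retained, \emph{converges absolutely} for $s\in\mcs_2$ (the oscillation having been exploited to upgrade the convergence well beyond that of the raw integral); holomorphy of $Q$ then follows from absolute, locally uniform convergence of this decomposition, by Weierstrass. Your proposed maximum-principle argument is a non-starter, since you have not established bounds on the boundary of $\mcs_2$.

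A secondary omission: you do not address the regime $0<x\le 2$, where there is no stationary point inside the integration range and the claimed expansion is vacuously true only because $x^{-2\Re(s)}\ll x^{-2\Re(s)-1}$ there; the paper treats this case separately. This is minor but worth noting since the bound \eqref{eq:Qdecay} is asserted for all $x>0$.
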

\begin{proof} Such a result is implicit in \cite[Section 5]{bkt}, but for completeness we sketch the argument here. Write $w=-\frac{15}{8}+it$. Restricted to $|t|\le 1$, the finite integral is bounded absolutely by $O(x^{-\frac{15}{8}} )$. For $|t|>1$, by Stirling's formula  for the gamma function (Lemma \ref{lem:stirl}), we have 
\begin{align}
\label{eq:stirling-expansion}\Gamma(w) e^{i \frac{\pi}{2} w }  x^{w}  \frac{\Gamma(\frac{k}{2}-s-w)}{\Gamma(\frac{k}{2}+s+w)}=&  
e^{-\tfrac{\pi}{2}(t+|t|)} \sqrt{2\pi} x^{-\frac{15}{8}} |t|^{\frac{11}{8}-2s} i^k  \exp\pb{ - i t\log \pb{\frac{|t|}{xe}}-i \frac{\pi}{4} }\\
\nonumber &+(\text{lower order terms}) + (\text{remainder}).
\end{align}
The lower order terms, decreasing by factors of $|t|^{-1}$, are of a similar shape as the leading term and are analytic for $s\in \mathcal{S}_2$. If we take enough terms in the Stirling expansion then the remainder, which is necessarily analytic in $ \mathcal{S}_2$ since all other terms in \eqref{eq:stirling-expansion} are, is of size $O(x^{-\frac{15}{8}} |t|^{-10})$ and thus contributes $O(x^{-\frac{15}{8}} )$ to $I(x,s)$. We consider the contribution of the leading term:
\[
\frac{1}{2\pi } \int_{|t|>1} e^{-\tfrac{\pi}{2}(t+|t|)} \sqrt{2\pi} x^{-\frac{15}{8}} |t|^{\frac{11}{8}-2s} i^k  \exp\pb{ - i t\log \pb{\frac{|t|}{xe}}-i \frac{\pi}{4} }  dt .
\]
First we note that the integral converges absolutely since $\frac{11}{8}-2\Re(s)<\frac{11}{8}-2(\frac{19}{16})=-1$ by assumption. We also see that the integrand decays exponentially as $t\to +\infty$. So for $t> 1$, the integral is defined and holomorphic for $s\in \mathcal{S}_2$, and we may bound it absolutely by $O(x^{-\frac{15}{8}} )$. We now restrict the integral to $t<-1$, where $|t|=-t$ and $e^{-\tfrac{\pi}{2}(t+|t|)} =1$, and consider two cases.

First, suppose that $x>2$. By a partition of unity, we may study the integral in dyadic intervals:
\begin{align*}
 I_{j}(x,s):= \frac{1}{2\pi } \int_{-\infty}^{\infty} \sqrt{2\pi} x^{-\frac{15}{8}} (-t)^{\frac{11}{8}-2s} i^k  \exp\pb{ - i t\log \pb{\frac{-t}{xe}}-i \frac{\pi}{4} }  W\pb{\frac{t}{-T_j}} dt,
 \end{align*}
 where $W(y)$ is a smooth bump function with compact support on $\mathbb{R}_{>0}$ such that $W(1)=1$, and $T_j=x2^j$ for integers $j\ge -\lceil \log_2 x \rceil $.  By the substitution $y= -\frac{t}{T_j}$ we have
\begin{align*}
 I_{j}(x,s)=(2\pi)^{-\half} x^{-\frac{1}{2}-2s} i^k e^{-i \frac{\pi}{4}} T_j  \int_{-\infty}^{\infty}  \pb{\frac{T_j y}{x}}^{\frac{11}{8}-2s}   \exp\pb{  i y T_j \log \pb{\frac{yT_j}{xe}}}  W(y) dy, 
\end{align*}
For $j\neq 0$, if $\mathrm{supp}(W)$ is chosen appropriately, we have that $|\frac{yT_j}{x}-1|\gg 1$, so $|\frac{d}{dy} (y T_j \log (\frac{yT_j}{xe}))| = |T_j \log (\frac{yT}{x})|\gg T_j$ and $|\frac{d^n}{dy^n} (y T_j \log (\frac{yT_j}{xe}))|\asymp T_j$ for $n\ge 2$. By integration by parts multiple times, we have the bound
\[
\sum_{ j\neq 0} |I_j(x,s)| \ll \sum_{ j\neq 0}  (T_j)^{-10} x^{-\frac{15}{8}} \ll x^{-\frac{15}{8}}.
\]
As for analyticity, since each finite integral $I_j(x,s)$ is holomorphic in $\mathcal{S}_2$ and we have shown absolute convergence of the sum of these integrals for $j\neq 0$, we can deduce that the sum itself is holomorphic in $\mathcal{S}_2$. It remains to consider the special case $j=0$ (for which $T_0 = x$). We have
\begin{align}
\label{I0} I_0(x,s)=(2\pi)^{-\half} x^{\frac{1}{2}-2s} i^k e^{-i \frac{\pi}{4}}  \int_{-\infty}^{\infty} y^{\frac{11}{8}-2s}   \exp\pb{ i y x \log \pb{\frac{y}{e}}}  W(y) dy.
\end{align}
The phase $\phi(y)= yx\log(\frac{y}{e})$ now has a stationary point at $y=1$, with $\phi(1)= -x$ and $\phi''(1)=x$. Evaluating the integral by a stationary phase expansion using Lemma \ref{lem:statph} yields 
\begin{align}
\label{eq:stat-expansion} \int_{-\infty}^{\infty} y^{\frac{11}{8}-2s}   \exp\pb{ i y x \log \pb{\frac{y}{e}}}  W(y) dy = e^{i \frac{\pi}{4} } \sqrt{\frac{2\pi}{x}} \exp(-ix) +\text{(lower order terms)} + \text{(remainder)}.
\end{align}
The lower order terms, decreasing by factors of $x^{-1}$, are bounded by $x^{-\frac32}$, of a similar shape as the leading term, and are analytic for $s\in \mathcal{S}_2$. If we take enough terms in the stationary phase expansion then the remainder, which is necessarily analytic in $ \mathcal{S}_2$ since all other terms in \eqref{eq:stat-expansion} are, is of size $O(x^{-10})$. Inserting \eqref{eq:stat-expansion} into \eqref{I0}, the contribution of the leading term is $i^k x^{-2s}e^{-ix}$ and the contribution of the rest is $O(x^{-2\Re(s)-1})$.

Now consider the second case, when $0<x\le 2$. We can then bound the integral with $t\ge -3$ absolutely, and for $t<-3$ we can use a partition of unity and argue as before. However since $-x\not\in (-\infty, -3)$, this time there is no stationary point, and so we obtain $I(x,s)\ll x^{-\frac{15}{8}}\ll 1$. It is still a true statement to write $I(x,s)=i^k x^{-2s}e^{-ix}+O(x^{-2\Re(s)-1}+x^{-\frac{15}{8}})$ since $ x^{-2\Re(s)}\ll x^{-2\Re(s)-1}$ for $0<x\le 2$.

\end{proof}

\begin{mylemma}[An exact evaluation of an integral]
\label{lem:keyintegral}   Let $k\ge 12$ be a fixed even positive integer. For $x>0$, we have 
    \begin{equation}
    \label{eq:keyint}
    J(x) :=   \frac{1}{2\pi i} \int_{(\frac34)} \Gamma(w) \, e^{\frac{i\pi w}{2}} \, x^{w} 
        \frac{\Gamma \left(  \frac{k}{2} - w \right)}{\Gamma \left(  \frac{k}{2} + w \right)} \, dw 
        = \sum_{j=0}^{ \frac{k}{2}-1}  (ix)^{-j} \frac{( \frac{k}{2}-1+j)!}{j!( \frac{k}{2}-1-j)!} \pb{(-1)^j + i^k e^{-ix}}.
        \end{equation}
    \end{mylemma}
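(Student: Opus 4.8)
The plan is to turn the Mellin--Barnes integral defining $J(x)$ into an elementary integral and then evaluate that integral by repeated integration by parts. First I would shift the contour in $J(x)$ to the right. In the half-plane $\Re(w)>\tfrac34$ the integrand $\Gamma(w)\,e^{i\pi w/2}x^{w}\,\Gamma(\tfrac k2-w)/\Gamma(\tfrac k2+w)$ is meromorphic, its only poles being the simple poles $w=\tfrac k2+n$ ($n\in\mathbb Z_{\ge0}$) of $\Gamma(\tfrac k2-w)$; writing $e^{i\pi w/2}x^{w}=(ix)^{w}$ and using $\Gamma(w)/\Gamma(\tfrac k2+w)=\prod_{j=0}^{k/2-1}(w+j)^{-1}$, the residue at $w=\tfrac k2+n$ equals $\tfrac{(-1)^{n+1}\Gamma(k/2+n)}{n!\,\Gamma(k+n)}(ix)^{k/2+n}$. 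Moving the contour out to $\Re(w)=N+\tfrac12$ and letting $N\to\infty$ --- the factor $1/\Gamma(\tfrac k2+w)$ decays super-exponentially in $N$, which overwhelms the growth of $x^{w}$, while the horizontal connecting segments vanish by Stirling's approximation (Lemma~\ref{lem:stirl}) --- then gives
\[
J(x)=\sum_{n\ge0}\frac{(-1)^{n}\,\Gamma(\tfrac k2+n)}{n!\,\Gamma(k+n)}\,(ix)^{k/2+n}.
\]

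Next I would recognise this series. Substituting the Euler integral $\Gamma(\tfrac k2+n)/\Gamma(k+n)=\tfrac1{\Gamma(k/2)}\int_0^1 t^{k/2+n-1}(1-t)^{k/2-1}\,dt$ and interchanging the (absolutely convergent) sum and integral, the $n$-sum collapses to $\sum_{n\ge0}\tfrac{(-ixt)^{n}}{n!}=e^{-ixt}$, yielding the representation
\[
J(x)=\frac{(ix)^{k/2}}{\Gamma(k/2)}\int_0^1\bigl(t(1-t)\bigr)^{k/2-1}e^{-ixt}\,dt
\]
(the same representation can also be obtained directly from the definition of $J(x)$ by writing $\Gamma(w)/\Gamma(\tfrac k2+w)$ as a Beta integral and invoking Lemma~\ref{lem:mellinrep}). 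I would then evaluate this elementary integral by integrating by parts $k-1$ times. With $P(t)=\bigl(t(1-t)\bigr)^{k/2-1}$ --- a polynomial of degree $k-2$ vanishing to order $\tfrac k2-1$ at both $t=0$ and $t=1$ --- the first $\tfrac k2-1$ boundary terms are zero, and the remaining ones are collected using the reflection symmetry $P(1-t)=P(t)$, which gives $P^{(\ell)}(1)=(-1)^{\ell}P^{(\ell)}(0)$, together with the explicit values $P^{(\ell)}(0)=\ell!\binom{k/2-1}{\ell-k/2+1}(-1)^{\ell-k/2+1}$ for $\tfrac k2-1\le\ell\le k-2$. Reindexing by $j=\ell-\tfrac k2+1$, multiplying through by $(ix)^{k/2}/\Gamma(\tfrac k2)=(ix)^{k/2}/(\tfrac k2-1)!$, and using $-(-1)^{k/2-1}=i^{k}$, one arrives at exactly the stated finite sum, the $(-1)^{j}$ terms coming from the endpoint $t=0$ and the $i^{k}e^{-ix}$ terms from the endpoint $t=1$.

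The step I expect to be the main obstacle is the rightward shift in the first paragraph: it crosses infinitely many poles, so rather than an ordinary finite contour shift one needs a truncation-and-limit argument with genuinely quantitative control --- via the full Stirling expansion --- of the tail integral along $\Re(w)=N+\tfrac12$, showing its decay in $N$ dominates the competing factor $x^{w}$ uniformly in the height variable. (Shifting $J(x)$ to the left is not an available alternative here: the integrand fails to be integrable along any vertical line with $\Re(w)\le\tfrac12$, so one cannot move the contour past the poles at $w=0,-1,\dots$, and the ``$(-1)^{j}$'' terms in the answer emerge only from the resummation, not from residues.) Everything after Step~1 --- the Beta-integral resummation and the integration-by-parts bookkeeping --- is routine and purely finite.
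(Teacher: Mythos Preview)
Your argument is correct and complete in outline, but it takes a genuinely different route from the paper's proof.

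The paper never shifts the original contour to the right across the poles of $\Gamma(\tfrac{k}{2}-w)$. Instead it first applies the reflection formula to rewrite
\[
\Gamma(w)\,\frac{\Gamma(\tfrac{k}{2}-w)}{\Gamma(\tfrac{k}{2}+w)}
=-\Gamma\bigl(1-w-\tfrac{k}{2}\bigr)\prod_{j=1}^{k/2-1}(w-j),
\]
and then interprets the polynomial factor $\prod_{j}(w-j)$ as the differential operator $(d/dx)^{k/2-1}$ acting on $x^{w-1}$. After the substitution $u=1-w-\tfrac{k}{2}$ the remaining Mellin--Barnes integral is of the simple type covered by Lemma~\ref{lem:mellinrep}, and only a \emph{finite} contour shift (across $u=-1,\dots,-(\tfrac{k}{2}-1)$) is needed; the final answer comes from the Leibniz rule applied to $x^{-k/2}e^{-ix}$ and the polynomial residue terms. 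In particular, the ``$(-1)^{j}$'' pieces \emph{do} arise from residues in the paper's argument --- precisely the residues of $\Gamma(u)$ at negative integers --- so your parenthetical remark that a leftward shift is unavailable is correct only for the \emph{original} integrand before the reflection-formula rewriting.

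Your approach (rightward shift $\to$ hypergeometric series $\to$ Beta integral $\to$ repeated integration by parts) is more systematic and perhaps easier to discover, and it gives a pleasant conceptual explanation of the two families of terms as contributions from the two endpoints $t=0,1$. The price is the infinite contour shift you flag: you need a uniform Stirling estimate showing that on $\Re(w)=N+\tfrac12$ the integrand is $O\bigl(x^{N}e^{N}N^{-N}(1+|t|)^{-2}\bigr)$, so that the tail decays like $x^{N}/N!$. This is routine but must be done carefully, whereas the paper's reflection-formula trick sidesteps the issue entirely by reducing to a finite shift.
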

    \begin{proof}
      Writing $w=\frac34+it$, Stirling's estimates (see \eqref{eq:stir} with $s=0$ and $\Re(w)=\frac34$) imply that the integrand is bounded by $x^\frac34 |t|^{-\frac54}$. Thus the integral $J(x)$ converges absolutely.
        Using the formulas $\Gamma(z+1)=z\Gamma(z)$ and $\sin(\pi z) \Gamma(z)\Gamma(1-z)=\pi$, we have
       \begin{align}
            \label{eq:gammamod} \Gamma(w)\frac{\Gamma \left( \frac{k}{2} - w \right)}{\Gamma \left( \frac{k}{2} + w \right)}&=\Gamma(w)\frac{\Gamma \left( 1 - w \right)}{\Gamma\left( \frac{k}{2} + w \right)}\prod_{j=1}^{\frac{k}{2}-1}(j-w)=\frac{\pi}{ (-1)^{\frac{k}{2}}\sin(\pi (\frac{k}{2}+w)) \Gamma \left( \frac{k}{2} + w \right)}\prod_{j=1}^{\frac{k}{2}-1}(j-w)\\
           \nonumber &=\Gamma\left(1-w-\tfrac{k}{2}\right) (-1)^{\frac{k}{2}}\prod_{j=1}^{\frac{k}{2}-1}(j-w)=-\Gamma\left(1-w-\tfrac{k}{2}\right)\prod_{j=1}^{\frac{k}{2}-1}(w-j).
       \end{align}
Now using \eqref{eq:gammamod}, we can express the left side of \eqref{eq:keyint} as
        \begin{align}
        \label{eq:Ixver1}
         J(x) &= -\frac{1}{2\pi i}\int_{(\frac34)} \Gamma(1-w-\tfrac{k}{2}) \, e^{\frac{i\pi w}{2}} \, x^{w} \prod_{j=1}^{\frac{k}{2}-1}(w-j) \ dw \\
     \nonumber       &=-x^{\frac{k}{2}}\left(\frac{d}{dx}\right)^{\frac{k}{2}-1}\pb{\frac{1}{2\pi i}\int_{(\frac34)} \Gamma(1-w-\tfrac{k}{2}) \, e^{\frac{i\pi w}{2}} \, x^{w-1} dw},
        \end{align}
        where absolute convergence allows us to differentiate under the integral sign. Making the substitution $u=1-w-\frac{k}{2}$, we rewrite \eqref{eq:Ixver1} as
        \begin{equation}
        \label{eq:Ixver2}
           J(x) = -x^{\frac{k}{2}}e^{-\frac{ i\pi}{2}(\frac{k}{2}-1)}\left(\frac{d}{dx}\right)^{\frac{k}{2}-1}\pb{\frac{x^{-\frac{k}{2}}}{2\pi i}\int_{(\frac14-\frac{k}{2})} \Gamma(u) \, e^{-i\frac{\pi}{2}u } \, x^{-u} du}.
        \end{equation}
       We now shift the line of integration to the right, from $\Re(u)=\frac14-\tfrac{k}{2}$ to $\Re(u)=-\frac34$, using Stirling's estimate to justify absolute convergence, and picking up residues (actually negative of the residues since we are shifting right) from the poles of $\Gamma(u)$ at $u=-j$ for $j=1,2,\cdots,\frac{k}{2}-1$. This gives
     \begin{align*}
         J(x) =-x^{\frac{k}{2}}e^{-\frac{ i\pi}{2}(\frac{k}{2}-1)}\left(\frac{d}{dx}\right)^{\frac{k}{2}-1}\pb{ -\sum_{j=1}^{\frac{k}{2}-1} \frac{(-1)^j}{j!} i^j x^{j-\frac{k}{2}} + \frac{x^{-\frac{k}{2}}}{2\pi i}\int_{(-\frac34)} \Gamma(u) \, e^{-i\frac{\pi}{2}u } \, x^{-u} du},
         \end{align*}
         which on using Lemma \ref{lem:mellinrep} yields
      \begin{align}
        \label{eq:Ixver3}
         J(x) &= -x^{\frac{k}{2}}e^{-\frac{ i\pi}{2}(\frac{k}{2}-1)}\left(\frac{d}{dx}\right)^{\frac{k}{2}-1}\pb{ -\sum_{j=1}^{\frac{k}{2}-1} \frac{(-1)^j}{j!} i^j x^{j-\frac{k}{2}} + x^{-\frac{k}{2}}(e^{-ix}-1)} \\ 
         \nonumber   &= -x^{\frac{k}{2}}e^{-\frac{ i\pi}{2}(\frac{k}{2}-1)}\left(\frac{d}{dx}\right)^{\frac{k}{2}-1}\pb{ -\sum_{j=0}^{\frac{k}{2}-1} \frac{(-1)^j}{j!} i^j x^{j-\frac{k}{2}} + x^{-\frac{k}{2}}e^{-ix}}.
              \end{align}
Using the Leibniz rule for differentiation, we obtain
    \begin{multline}
        \label{eq:Ixver4}
        J(x) = x^{\frac{k}{2}}i^{-(\frac{k}{2}-1)} \left( \sum_{j=0}^{\frac{k}{2}-1}\frac{(-i)^j}{j!} x^{j-k+1} \pb{j-\frac{k}{2}}\pb{j-\frac{k}{2}-1}\cdots\bigg(j-k+2\bigg)   \right. \\ \left. - \sum_{j=0}^{\frac{k}{2}-1}  \binom{\frac{k}{2}-1}{j}  \pb{\frac{k}{2}}\pb{\frac{k}{2}+1}\cdots \pb{\frac{k}{2}+j-1} (-1)^j x^{-\frac{k}{2}-j} (-i)^{\frac{k}{2}-1-j}e^{-ix} \vphantom{\frac12} \right).
    \end{multline}
Replacing $j$ with $\frac{k}{2}-j-1$ in the first sum, and then simplifying, we obtain the result.   
  \end{proof}

\section{Proof of Theorem \ref{thm:mainthm}}

Recall definition \eqref{eq:setsdef} and assume that $s \in \mcs_1$.  Using Lemmas \ref{lem:addtwistmoment}, \ref{lem:recipexp}, and \ref{lem:mellinrep}, we have

\begin{align}
\label{eq:Mfsprq2}
    M_f(s,p,r;q) &= \sum_{n \geq 1} \frac{\lambda_f(n)}{n^{\half+s}}e\pb{\frac{n\ov{p}r}{q}} =\sum_{n \geq 1} \frac{\lambda_f(n)}{n^{\half+s}}e\pb{-\frac{n\ov{q}r}{p}}e\pb{\frac{nr}{pq}} \\
    \nonumber &= \sum_{n \geq 1} \frac{\lambda_f(n)}{n^{\half+s}}e\pb{-\frac{n\ov{q}r}{p}} \pb{1+\frac{1}{2\pi i}\int_{(-\frac58)} \Gamma(w) e^{i \frac{\pi}{2} w }\pb{\frac{2\pi n r}{pq}}^{-w} dw} .
\end{align}
By the absolute convergence of the sum (since $\Re(s)>\frac{19}{16}>\half$) and integral (by Stirling's estimates), we may exchange the order of the summation and the integration, and use Lemma \ref{lem:addtwistmoment} once again to get
\begin{equation}
\label{eq:Mfsprq3}
      M_f(s,p,r;q)- M_f(s,-q,r;p)=  \frac{1}{2\pi i}\int_{(-\frac58)}   \Gamma(w) e^{i \frac{\pi}{2} w } \pb{\frac{2\pi r}{pq}}^{-w} L\pb{s+w+\half, f , -\frac{\ov{q}r}{p}}  dw, 
\end{equation}
where we used that $\Re(s+w+\half)>\frac{19}{16}-\frac59+\half>1$ in the integrand to form the $L$-function.

Next we'd like to shift the integral in \eqref{eq:Mfsprq3} to the left to $\Re(w)=-\frac{15}{8}$, but first we check that the integral on any vertical line with $-\frac{15}{8}\le \Re(w)\le -\frac58$ will converge absolutely. For this, we write $t=\Im(w)$ and use Stirling's estimates and Lemma \ref{lem:convexity}. For $\Re(s+w+\half)>0$ and $|t|>1$, we have the $t$-aspect bound 
\[
\left\lvert \Gamma(w) e^{i \frac{\pi}{2} w }  L\pb{s+w+\half, f , -\frac{\ov{q}r}{p}} \right\rvert \ll  |t|^{\Re(w)-\half} |t|^{\max\{1-\Re(s+w+\half),0\}}= |t|^{\max \{-\frac{19}{16}, -\frac98\} }.
\]
For $\Re(s+w+\half)\le 0$, we have the $t$-aspect bound
\[
\left\lvert  \Gamma(w) e^{i \frac{\pi}{2} w }  L\pb{s+w+\half, f, -\frac{\ov{q}r}{p}} \right\rvert  \ll  |t|^{\Re(w)-\half} |t|^{1-2\Re(s+w+\half)} < |t|^{\frac{15}{8}-\half -2\Re(s)}.
\]
The final exponent here is strictly less than $-1$, since $\Re(s)>\frac{19}{16}$.

Having established absolute convergence, we now move the integral in \eqref{eq:Mfsprq3} to $\Re(w)=-\frac{15}{8}$, possibly crossing a pole at $w=-1$. By Lemma \ref{lem:residues}, we 
get
\begin{equation}
\label{eq:Mfsprq4}
      M_f(s,p,r;q)-M_f(s,-q,r;p)=  \frac{2 \pi i r}{pq} L\pb{ s-\half, f , -\frac{\ov{q}r}{p}} + H(s),
\end{equation}
where
\begin{equation}
    \label{eq:Js}
   H(s) = \frac{1}{2 \pi i} \int_{(-\frac{15}{8})} \Gamma(w) e^{i \frac{\pi}{2} w } \pb{\frac{2\pi r}{pq}}^{-w}  L\pb{s+w+\half, f , -\frac{\ov{q}r}{p}} dw.
\end{equation}
It is clear that every term in \eqref{eq:Mfsprq4} except possibly $H(s)$ is holomorphic in the wider set $s\in \mcs_2$. Our eventual goal is to obtain an analytic continuation of $H(s)$ to $s\in \mcs_2$. Then we will be able to set $s=0$, and obtain the desired identity in Theorem \ref{thm:mainthm}.

Using the functional equation \eqref{eq:addtwistfe}, we get
\begin{equation}
    \label{eq:Js2}
   H(s)=  \frac{1}{2 \pi i} \int_{(-\frac{15}{8})} \Gamma(w) e^{i \frac{\pi}{2} w } \pb{\frac{2\pi r}{pq}}^{-w} i^k \pb{\frac{2\pi}{p}}^{2s+2w} \frac{\Gamma(\frac{k}{2}-s-w)}{\Gamma(\frac{k}{2}+s+w)} 
   L\pb{\half-s-w, f , \frac{q \ov{r}}{p}} dw.
\end{equation}
We have that $\Re(\thalf-s-w)>\thalf -\frac{11}{8}-(-\frac{15}{8})>1$, so we can write the additively twisted $L$-series as an absolutely convergent sum. Then changing the order of the sum and integral, we arrive at 

\begin{equation}
    \label{eq:Js3} 
   H(s) =  i^k  \pb{\frac{2\pi}{p}}^{2s}  \sum_{n \geq 1} \frac{\lambda_f(n)}{n^{\half-s}} e\pb{\frac{nq\ov{r}}{p}} I\pb{\frac{2\pi n q}{pr},s},
\end{equation}
where $I(x,s)$ was defined in \eqref{eq:int-def}. We may write
\begin{align}
    \label{eq:I-rewrite} 
I\pb{\frac{2\pi n q}{pr},s} &= I\pb{\frac{2\pi n q}{pr},s} -  i^k n^{-2s} \pb{\frac{2\pi q}{pr}}^{-2s}e\pb{-\frac{nq}{pr}} +  i^k n^{-2s} \pb{\frac{2\pi q}{pr}}^{-2s}e\pb{-\frac{nq}{pr}}. 
%\\ \nonumber &=Q\pb{\frac{2\pi n q}{pr},s} +  i^k n^{-2s} \pb{\frac{2\pi q}{pr}}^{-2s}e\pb{-\frac{nq}{pr}},
\end{align}
%where $Q(x,s)$ is the function from Lemma \ref{lem:statphase}. 
Inserting \eqref{eq:I-rewrite} into \eqref{eq:Js3}  and using Lemma \ref{lem:recipexp}, we get
\begin{align}
\label{eq:J-rewrite}
    H(s) =  i^k \pb{\frac{2\pi}{p}}^{2s}  \sum_{n \geq 1} \frac{\lambda_f(n)}{n^{\half-s}} e\pb{\frac{nq\ov{r}}{p}}Q\pb{\frac{2\pi n q}{pr},s} + \pb{\frac{q}{r}}^{-2s} L\pb{s+\half, f,\frac{-q\ov{p}}{r}}.
\end{align}
Plugging \eqref{eq:J-rewrite} back into \eqref{eq:Mfsprq4} and using Lemma \ref{lem:addtwistmoment}, we obtain that for $s\in \mathcal{S}_{1}$ we have
\begin{multline}
\label{eq:clean} 
M_f(s,p,r;q)-M_f(s,-q,r;p) -  \pb{\frac{q}{r}}^{-2s} M_f(s,-p,q;r) \\
=  \frac{2 \pi i r}{pq} L\pb{ s-\half, f , -\frac{\ov{q}r}{p}}  + i^k \pb{\frac{2\pi}{p}}^{2s}  \sum_{n \geq 1} \frac{\lambda_f(n)}{n^{\half-s}} e\pb{\frac{nq\ov{r}}{p}}Q\pb{\frac{2\pi n q}{pr},s}. 
\end{multline}
By Lemma \ref{lem:statphase}, we have that $Q(\frac{2\pi n q}{pr},s)$ is analytic for $s$ in the wider set $\mathcal{S}_2$, and by its decay property \eqref{eq:Qdecay} we have that the sum $\sum_{n\ge 1}$ on the right hand side of \eqref{eq:clean} converges absolutely for $s\in \mathcal{S}_2$, so that the sum itself is analytic for $s\in \mathcal{S}_2$. Thus, by the principle of analytic continuation, the identity \eqref{eq:clean} holds for $s\in \mathcal{S}_2$. In particular, at $s=0$, we have
\begin{multline} \label{eq:cleaner}
 M_f(p,r;q)-M_f(-q,r;p) -   M_f(-p,q;r) \\
=  \frac{2 \pi i r}{pq} L\pb{-\half, f , \frac{-\ov{q}r}{p}} + i^k   \sum_{n \geq 1} \frac{\lambda_f(n)}{n^{\half}} e\pb{\frac{nq\ov{r}}{p}}Q\pb{\frac{2\pi n q}{pr},0}. 
\end{multline}

We now evaluate $Q(\frac{2\pi n q}{pr},0)$. 
\begin{mylemma} \label{lem:Qidentity} Let $k\ge 12$ be a fixed even integer. We have
\begin{align*}
Q\pb{\frac{2\pi n q}{pr},0} = -  i\frac{pr}{2 \pi q n }\frac{\Gamma(\frac{k}{2}+1)}{\Gamma(\frac{k}{2}-1)}+ \sum_{j=1}^{\frac{k}{2}-1}  \pb{\frac{2\pi i q n }{pr}}^{-j}  \frac{(\frac{k}{2}-1+j)!}{j!(\frac{k}{2}-1-j)!} \pb{(-1)^j + i^k e\pb{-\frac{qn}{pr}} }. 
\end{align*}
\end{mylemma}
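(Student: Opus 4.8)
The plan is to identify $Q(x,0)$ with the quantity $J(x)$ of Lemma~\ref{lem:keyintegral}, up to an explicit finite sum of residues, and then specialize $x=\tfrac{2\pi nq}{pr}$. Recall from Lemma~\ref{lem:statphase} that $Q(x,s)=I(x,s)-i^kx^{-2s}e^{-ix}$ with $I(x,s)$ the integral in \eqref{eq:int-def} over the line $\Re(w)=-\tfrac{15}{8}$, and that $Q(x,s)$ is holomorphic for $s\in\mcs_2$. The integrand of $I(x,s)$ has the same shape as that of $J(x)$; only the line of integration differs ($\Re(w)=-\tfrac{15}{8}$ versus $\Re(w)=\tfrac34$), together with the presence of $s$. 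So the first step is to shift the contour of $I(x,s)$ to the right, onto $\Re(w)=\tfrac34$. To make the rectangular-contour argument clean I first take $s\in\mcs_1$: there $\Re(s)>\tfrac{19}{16}$, so by Stirling (Lemma~\ref{lem:stirl}) the integrand of $I(x,s)$ on $\Re(w)=-\tfrac{15}{8}$ is $\ll x^{-15/8}(1+|t|)^{11/8-2\Re(s)}$, making the integral absolutely convergent, and the integrals on the horizontal segments at height $\pm T$ tend to $0$ as $T\to\infty$. The only poles crossed are the simple poles of $\Gamma(w)$ at $w=0$ and $w=-1$: the poles of $\Gamma(\tfrac k2-s-w)$ lie to the right of $\Re(w)=\tfrac34$ because $\Re(s)<\tfrac{11}{8}$, and the remaining factors are holomorphic and nonvanishing on the strip. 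Using $\mathrm{Res}_{w=0}\Gamma(w)=1$ and $\mathrm{Res}_{w=-1}\Gamma(w)=-1$ to evaluate the two residues gives, for $s\in\mcs_1$,
\[
I(x,s)=\mathcal J(x,s)-\frac{\Gamma(\tfrac k2-s)}{\Gamma(\tfrac k2+s)}-\frac{i}{x}\,\frac{\Gamma(\tfrac k2+1-s)}{\Gamma(\tfrac k2-1+s)},
\]
where $\mathcal J(x,s):=\frac1{2\pi i}\int_{(3/4)}\Gamma(w)e^{i\pi w/2}x^{w}\tfrac{\Gamma(k/2-s-w)}{\Gamma(k/2+s+w)}\,dw$.

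The second step is analytic continuation in $s$. By Stirling the integrand of $\mathcal J(x,s)$ on $\Re(w)=\tfrac34$ is $\ll x^{3/4}(1+|t|)^{-5/4-2\Re(s)}$, so $\mathcal J(x,s)$ converges absolutely and is holomorphic for $\Re(s)>-\tfrac18$; in particular it is holomorphic on $\mcs_2$ and $\mathcal J(x,0)=J(x)$. The other terms on the right-hand side of $Q(x,s)=I(x,s)-i^kx^{-2s}e^{-ix}$ (after inserting the displayed formula for $I(x,s)$) are manifestly holomorphic near $s=0$, and $Q(x,s)$ is holomorphic on $\mcs_2$. Since $\mcs_2$ is connected and contains $\mcs_1$, the identity
\[
Q(x,s)=\mathcal J(x,s)-\frac{\Gamma(\tfrac k2-s)}{\Gamma(\tfrac k2+s)}-\frac{i}{x}\,\frac{\Gamma(\tfrac k2+1-s)}{\Gamma(\tfrac k2-1+s)}-i^kx^{-2s}e^{-ix},
\]
valid on $\mcs_1$, extends to all of $\mcs_2$; setting $s=0$ gives $Q(x,0)=J(x)-1-\tfrac{i}{x}\tfrac{\Gamma(k/2+1)}{\Gamma(k/2-1)}-i^ke^{-ix}$.

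For the final step, insert the closed form for $J(x)$ from Lemma~\ref{lem:keyintegral} and note that its $j=0$ term equals precisely $1+i^ke^{-ix}$, which cancels the $-1-i^ke^{-ix}$ above; this leaves
\[
Q(x,0)=-\frac{i}{x}\,\frac{\Gamma(\tfrac k2+1)}{\Gamma(\tfrac k2-1)}+\sum_{j=1}^{\frac k2-1}(ix)^{-j}\,\frac{(\tfrac k2-1+j)!}{j!\,(\tfrac k2-1-j)!}\pb{(-1)^j+i^ke^{-ix}},
\]
and substituting $x=\tfrac{2\pi nq}{pr}$, so that $\tfrac ix=\tfrac{ipr}{2\pi qn}$, $(ix)^{-j}=\pb{\tfrac{2\pi iqn}{pr}}^{-j}$, and $e^{-ix}=e\pb{-\tfrac{qn}{pr}}$, yields the stated formula. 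The one delicate point is the contour shift in the first step: the integral defining $I(x,0)$ is only conditionally convergent (its integrand grows like $|t|^{11/8}$ on $\Re(w)=-\tfrac{15}{8}$), so the rectangle argument cannot be run directly at $s=0$; carrying it out instead for $s\in\mcs_1$, where one has genuine absolute convergence, and then transporting the identity to $s=0$ by analytic continuation — crucially using that $Q$ is already known to be holomorphic on $\mcs_2$ from Lemma~\ref{lem:statphase} — is what makes the argument rigorous.
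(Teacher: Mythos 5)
Your proof is correct and follows essentially the same route as the paper's: shift the contour of $I(x,s)$ from $\Re(w)=-\tfrac{15}{8}$ to $\Re(w)=\tfrac34$ for $s\in\mcs_1$ (picking up the poles of $\Gamma(w)$ at $w=0,-1$, with no poles of $\Gamma(\tfrac k2 -s-w)$ in the way), analytically continue the resulting identity to $s\in\mcs_2$ using the holomorphy of $Q$ from Lemma~\ref{lem:statphase}, set $s=0$, and evaluate the shifted integral via Lemma~\ref{lem:keyintegral}, with the $j=0$ term cancelling the $-1-i^ke^{-ix}$ contribution. The subtlety you flag about the contour shift not being legitimate directly at $s=0$ is exactly the point the paper handles by working first in $\mcs_1$ and continuing.
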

\noindent We postpone the proof of this result and first derive the consequence that we need.
Applying Lemma \ref{lem:Qidentity} to subsitute for $Q\pb{\frac{2\pi n q}{pr},0}$ in \eqref{eq:cleaner}, and then using Lemma \ref{lem:recipexp}, we get
\begin{align*}
&M_f(p,r;q)-M_f(-q,r;p) -   M_f(-p,q;r)  \\
&=  \frac{2 \pi i r}{pq} L\pb{-\half, f , \frac{-\ov{q}r}{p}} 
-  i^{k+1}  \frac{pr}{2 \pi q}\frac{\Gamma(\frac{k}{2}+1)}{\Gamma(\frac{k}{2}-1)} L\pb{\frac32, f, \frac{q\ov{r}}{p}} \\
&+\sum_{j=1}^{\frac{k}{2}-1} \frac{1}{j!} \frac{\Gamma(\frac{k}{2}+j)}{\Gamma(\frac{k}{2}-j)} \pb{\frac{2\pi i q}{pr}}^{-j} \pb{ L\pb{ \half +j, f , \frac{-\ov{p}q}{r}} + i^k(-1)^j L\pb{ \half +j, f , \frac{q\ov{r}}{p}}}.
\end{align*}
After observing that the first two terms on the right hand side cancel each other out by the functional equation \eqref{eq:addtwistfe}, we obtain the desired identity \eqref{eq:thmhol} and complete the proof of Theorem \ref{thm:mainthm}.

\subsection{Proof of Lemma \ref{lem:Qidentity}}

Recall the definition
\begin{align*}
Q\pb{\frac{2\pi n q}{pr},s} = \frac{1}{2\pi i} \int_{(-\frac{15}{8})} \Gamma(w) e^{i \frac{\pi}{2} w} \pb{\frac{2\pi n q}{pr}}^{w} \frac{\Gamma(\frac{k}{2}-s-w)}{\Gamma(\frac{k}{2}+s+w)}dw - i^k  \pb{\frac{2\pi n q}{pr}}^{-2s}  e\pb{-\frac{nq}{pr}},
\end{align*}
from Lemma \ref{lem:statphase}, valid for $s\in\mathcal{S}_1$. The integral converges absolutely on any fixed vertical line with $\Re(w)\ge -\frac{15}{8}$ because by Stirling's estimates we have
\begin{align}
\label{eq:stir} \left|\Gamma(w) e^{i \frac{\pi}{2} w} \frac{\Gamma(\frac{k}{2}-s-w)}{\Gamma(\frac{k}{2}+s+w)} \right| \ll (1+|t|)^{-\Re(w)-\half-2\Re(s)},
\end{align}
where $t=\Im(w)$. For $s\in\mathcal{S}_1$ and $\Re(w)\ge -\frac{15}{8}$, the exponent above is strictly less than $-1$. We shift the line of integration to the right, to $\Re(w) = \frac34$, in the process crossing poles of $\Gamma(w)$ at $w=-1$ and $w=0$ (we do not encounter any poles of $\Gamma(\frac{k}{2}-s-w)$ as $k\ge 12$). Keeping in mind that we pick up negative of the residues since we are shifting contours to the right, we get by \eqref{eq:gammares} that
\begin{align}
\label{eq:equality} Q\pb{\frac{2\pi n q}{pr},s} &= \frac{1}{2\pi i} \int_{(\frac34)} \Gamma(w) e^{i \frac{\pi}{2} w} \pb{\frac{2\pi n q}{pr}}^{w} \frac{\Gamma(\frac{k}{2}-s-w)}{\Gamma(\frac{k}{2}+s+w)}dw \\
\nonumber &- i\frac{pr}{2 \pi n q}\frac{\Gamma(\frac{k}{2}-s+1)}{\Gamma(\frac{k}{2}+s-1)} - \frac{\Gamma(\frac{k}{2}-s)}{\Gamma(\frac{k}{2}+s)} - i^k  \pb{\frac{2\pi n q}{pr}}^{-2s} e\pb{-\frac{qn}{pr}} 
\end{align}
for $s\in \mathcal{S}_1$. The right hand side now is defined and analytic for $s$ in the wider set $\mathcal{S}_2$, because by \eqref{eq:stir}, the shifted integral on the line $\Re(w)=\frac34$ converges absolutely for $s\in \mathcal{S}_2$. Since $Q(\frac{2\pi n q}{pr},s)$ is also analytic for $s\in \mathcal{S}_2$ by Lemma \ref{lem:statphase}, we have by the principle of analytic continuation that the equality \eqref{eq:equality} holds for $s\in \mathcal{S}_2$. In particular, at $s=0$,
\begin{align}
\label{eq:Q0identity} Q\pb{\frac{2\pi n q}{pr},0} &= \frac{1}{2\pi i} \int_{(\frac34)} \Gamma(w) e^{i \frac{\pi}{2} w} \pb{\frac{2\pi n q}{pr}}^{w} \frac{\Gamma(\frac{k}{2}-w)}{\Gamma(\frac{k}{2}+w)}dw 
+\frac{pr}{2 \pi i n q}\frac{\Gamma(\frac{k}{2}+1)}{\Gamma(\frac{k}{2}-1)} -1- i^k  e\pb{-\frac{qn}{pr}}.
\end{align}
The integral here is evaluated as a sum using Lemma \ref{lem:keyintegral}:
\begin{align}
\label{eq:evaluated} \frac{1}{2\pi i} \int_{(\frac34)} \Gamma(w) e^{i \frac{\pi}{2} w} \pb{\frac{2\pi n q}{pr}}^{w} \frac{\Gamma(\frac{k}{2}-w)}{\Gamma(\frac{k}{2}+w)}dw =\sum_{j=0}^{\frac{k}{2}-1}  \pb{\frac{2\pi i q n }{pr}}^{-j} \frac{(\frac{k}{2}-1+j)!}{j!(\frac{k}{2}-1-j)!} \pb{(-1)^j + i^k e\pb{-\frac{qn}{pr}} }. 
\end{align}
The summand corresponding to $j=0$ equals $1 + i^ke(-\frac{qn}{pr})$, which is exactly canceled out when \eqref{eq:evaluated} is inserted in \eqref{eq:Q0identity}. This gives Lemma \ref{lem:Qidentity}.

\section{Proof of Theorem \ref{thm:mainthm2}}

The proof of Theorem \ref{thm:mainthm2} is similar to the proof of Theorem \ref{thm:mainthm}, and in fact simpler, so we only sketch the argument. The proof begins with the same set-up and initial contour shifting step, yielding an identity just like \eqref{eq:Mfsprq4}. The left hand side of this identity gives (at $s=0$) two of the three main terms required on the left hand side of \eqref{eq:thm2eq}. The right hand side of the identity contains an additively twisted $L$-series and an integral transform. The $L$-series at $s=0$ is seen to contribute to the the right hand side of \eqref{eq:thm2eq} after applying the functional equation, which in the Maass case can be found in \cite[Section A.4]{KMV}. Thus it remains to analytically continue the integral transform to $s=0$ to obtain the third main term on the left hand side of \eqref{eq:thm2eq}, up to lower order terms on the right hand side of \eqref{eq:thm2eq}. As before, the functional equation is applied to the integrand of this transform. The transform then looks similar to \eqref{eq:Js2}, except that the ratio of gamma factors is
\[
\frac{\Gamma(\frac14+ \frac{\kappa_g}{2} + \frac{it_g}{2} -\frac{s}{2}-\frac{w}{2}  )\Gamma(\frac14 + \frac{\kappa_g}{2}  - \frac{it_g}{2}  -\frac{s}{2}-\frac{w}{2} ) }{  \Gamma(\frac14 + \frac{\kappa_g}{2}  + \frac{it_g}{2} + \frac{s}{2}+\frac{w}{2}  )\Gamma(\frac14 + \frac{\kappa_g}{2}  - \frac{it_g}{2} +\frac{s}{2}+\frac{w}{2} )  }, 
\]
where $t_g$ is the spectral parameter of $g$ and $\kappa_g=\pm 1$ depending on the parity of $g$. The proof now follows a different path because we do not know how to evaluate the transform in an exact way as we did (at $s=0$) in the holomorphic case using Lemma \ref{lem:keyintegral}. The strategy is to evaluate the transform as an asymptotic series up to an arbitrarily small error term using a stationary phase expansion. This follows the same method, sketched below, from \cite{bkt}, which the reader can consult for more details. First we shift the line of integration of the transform far to the left to $\Re(w)=-c$ for any large positive half-integer $c$, crossing poles at $w\in\mathbb{Z}_{<0}$ whose residues contribute to the right hand side of \eqref{eq:thm2eq}. The shifted integral is studied in dyadic intervals, as in Lemma \ref{lem:statphase}. For $|\Im(w)|<1$ the integral is bounded absolutely to get a contribution of $O((\frac{q}{pr})^{-c})$. For $|\Im(w)|\ge 1$, we apply Stirling's expansion. Although the ratio of gamma factors is different, the oscillatory behavior is the same as in \eqref{eq:stirling-expansion}. In segments away from the stationary points, we apply integration by parts repeatedly, obtaining a contribution of $O((\frac{q}{pr})^{-c})$. Close to the stationary points, we apply a stationary phase expansion just as in \eqref{eq:stat-expansion}. The main term which arises combines, using Lemma \ref{lem:recipexp}, with the additive character just as in \eqref{eq:J-rewrite} to give the third required main term on the left hand side of \eqref{eq:thm2eq}. The lower order main terms from the Stirling and stationary phase expansions contribute to the right hand side of \eqref{eq:thm2eq}, while the remainders contribute $O((\frac{q}{pr})^{-c})$ on taking sufficiently long expansions.

\bibliographystyle{amsalpha}
\bibliography{refs}

\end{document}